\documentclass[12pt]{amsart}

\usepackage{amsthm}
\usepackage{amsmath}
\usepackage{amssymb}
\usepackage{latexsym}
\usepackage{verbatim}
\usepackage[mathscr]{eucal}
\usepackage{amscd}
\usepackage{bm}


\begin{document}


\newcommand{\alp}{\alpha}
\newcommand{\bet}{\beta}
\newcommand{\gam}{\gamma}
\newcommand{\del}{\delta}
\newcommand{\eps}{\epsilon}
\newcommand{\zet}{\zeta}
\newcommand{\tht}{\theta}
\newcommand{\iot}{\iota}
\newcommand{\kap}{\kappa}
\newcommand{\lam}{\lambda}
\newcommand{\sig}{\sigma}
\newcommand{\ups}{\upsilon}
\newcommand{\ome}{\omega}
\newcommand{\vep}{\varepsilon}
\newcommand{\vth}{\vartheta}
\newcommand{\vpi}{\varpi}
\newcommand{\vrh}{\varrho}
\newcommand{\vsi}{\varsigma}
\newcommand{\vph}{\varphi}
\newcommand{\Gam}{\Gamma}
\newcommand{\Del}{\Delta}
\newcommand{\Tht}{\Theta}
\newcommand{\Lam}{\Lambda}
\newcommand{\Sig}{\Sigma}
\newcommand{\Ups}{\Upsilon}
\newcommand{\Ome}{\Omega}


\newcommand{\frka}{{\mathfrak a}}    \newcommand{\frkA}{{\mathfrak A}}
\newcommand{\frkb}{{\mathfrak b}}    \newcommand{\frkB}{{\mathfrak B}}
\newcommand{\frkc}{{\mathfrak c}}    \newcommand{\frkC}{{\mathfrak C}}
\newcommand{\frkd}{{\mathfrak d}}    \newcommand{\frkD}{{\mathfrak D}}
\newcommand{\frke}{{\mathfrak e}}    \newcommand{\frkE}{{\mathfrak E}}
\newcommand{\frkf}{{\mathfrak f}}    \newcommand{\frkF}{{\mathfrak F}}
\newcommand{\frkg}{{\mathfrak g}}    \newcommand{\frkG}{{\mathfrak G}}
\newcommand{\frkh}{{\mathfrak h}}    \newcommand{\frkH}{{\mathfrak H}}
\newcommand{\frki}{{\mathfrak i}}    \newcommand{\frkI}{{\mathfrak I}}
\newcommand{\frkj}{{\mathfrak j}}    \newcommand{\frkJ}{{\mathfrak J}}
\newcommand{\frkk}{{\mathfrak k}}    \newcommand{\frkK}{{\mathfrak K}}
\newcommand{\frkl}{{\mathfrak l}}    \newcommand{\frkL}{{\mathfrak L}}
\newcommand{\frkm}{{\mathfrak m}}    \newcommand{\frkM}{{\mathfrak M}}
\newcommand{\frkn}{{\mathfrak n}}    \newcommand{\frkN}{{\mathfrak N}}
\newcommand{\frko}{{\mathfrak o}}    \newcommand{\frkO}{{\mathfrak O}}
\newcommand{\frkp}{{\mathfrak p}}    \newcommand{\frkP}{{\mathfrak P}}
\newcommand{\frkq}{{\mathfrak q}}    \newcommand{\frkQ}{{\mathfrak Q}}
\newcommand{\frkr}{{\mathfrak r}}    \newcommand{\frkR}{{\mathfrak R}}
\newcommand{\frks}{{\mathfrak s}}    \newcommand{\frkS}{{\mathfrak S}}
\newcommand{\frkt}{{\mathfrak t}}    \newcommand{\frkT}{{\mathfrak T}}
\newcommand{\frku}{{\mathfrak u}}    \newcommand{\frkU}{{\mathfrak U}}
\newcommand{\frkv}{{\mathfrak v}}    \newcommand{\frkV}{{\mathfrak V}}
\newcommand{\frkw}{{\mathfrak w}}    \newcommand{\frkW}{{\mathfrak W}}
\newcommand{\frkx}{{\mathfrak x}}    \newcommand{\frkX}{{\mathfrak X}}
\newcommand{\frky}{{\mathfrak y}}    \newcommand{\frkY}{{\mathfrak Y}}
\newcommand{\frkz}{{\mathfrak z}}    \newcommand{\frkZ}{{\mathfrak Z}}


\newcommand{\bfa}{{\mathbf a}}    \newcommand{\bfA}{{\mathbf A}}
\newcommand{\bfb}{{\mathbf b}}    \newcommand{\bfB}{{\mathbf B}}
\newcommand{\bfc}{{\mathbf c}}    \newcommand{\bfC}{{\mathbf C}}
\newcommand{\bfd}{{\mathbf d}}    \newcommand{\bfD}{{\mathbf D}}
\newcommand{\bfe}{{\mathbf e}}    \newcommand{\bfE}{{\mathbf E}}
\newcommand{\bff}{{\mathbf f}}    \newcommand{\bfF}{{\mathbf F}}
\newcommand{\bfg}{{\mathbf g}}    \newcommand{\bfG}{{\mathbf G}}
\newcommand{\bfh}{{\mathbf h}}    \newcommand{\bfH}{{\mathbf H}}
\newcommand{\bfi}{{\mathbf i}}    \newcommand{\bfI}{{\mathbf I}}
\newcommand{\bfj}{{\mathbf j}}    \newcommand{\bfJ}{{\mathbf J}}
\newcommand{\bfk}{{\mathbf k}}    \newcommand{\bfK}{{\mathbf K}}
\newcommand{\bfl}{{\mathbf l}}    \newcommand{\bfL}{{\mathbf L}}
\newcommand{\bfm}{{\mathbf m}}    \newcommand{\bfM}{{\mathbf M}}
\newcommand{\bfn}{{\mathbf n}}    \newcommand{\bfN}{{\mathbf N}}
\newcommand{\bfo}{{\mathbf o}}    \newcommand{\bfO}{{\mathbf O}}
\newcommand{\bfp}{{\mathbf p}}    \newcommand{\bfP}{{\mathbf P}}
\newcommand{\bfq}{{\mathbf q}}    \newcommand{\bfQ}{{\mathbf Q}}
\newcommand{\bfr}{{\mathbf r}}    \newcommand{\bfR}{{\mathbf R}}
\newcommand{\bfs}{{\mathbf s}}    \newcommand{\bfS}{{\mathbf S}}
\newcommand{\bft}{{\mathbf t}}    \newcommand{\bfT}{{\mathbf T}}
\newcommand{\bfu}{{\mathbf u}}    \newcommand{\bfU}{{\mathbf U}}
\newcommand{\bfv}{{\mathbf v}}    \newcommand{\bfV}{{\mathbf V}}
\newcommand{\bfw}{{\mathbf w}}    \newcommand{\bfW}{{\mathbf W}}
\newcommand{\bfx}{{\mathbf x}}    \newcommand{\bfX}{{\mathbf X}}
\newcommand{\bfy}{{\mathbf y}}    \newcommand{\bfY}{{\mathbf Y}}
\newcommand{\bfz}{{\mathbf z}}    \newcommand{\bfZ}{{\mathbf Z}}


\newcommand{\cala}{{\mathcal A}}
\newcommand{\calb}{{\mathcal B}}
\newcommand{\calc}{{\mathcal C}}
\newcommand{\cald}{{\mathcal D}}
\newcommand{\cale}{{\mathcal E}}
\newcommand{\calf}{{\mathcal F}}
\newcommand{\calg}{{\mathcal G}}
\newcommand{\calh}{{\mathcal H}}
\newcommand{\cali}{{\mathcal I}}
\newcommand{\calj}{{\mathcal J}}
\newcommand{\calk}{{\mathcal K}}
\newcommand{\call}{{\mathcal L}}
\newcommand{\calm}{{\mathcal M}}
\newcommand{\caln}{{\mathcal N}}
\newcommand{\calo}{{\mathcal O}}
\newcommand{\calp}{{\mathcal P}}
\newcommand{\calq}{{\mathcal Q}}
\newcommand{\calr}{{\mathcal R}}
\newcommand{\cals}{{\mathcal S}}
\newcommand{\calt}{{\mathcal T}}
\newcommand{\calu}{{\mathcal U}}
\newcommand{\calv}{{\mathcal V}}
\newcommand{\calw}{{\mathcal W}}
\newcommand{\calx}{{\mathcal X}}
\newcommand{\caly}{{\mathcal Y}}
\newcommand{\calz}{{\mathcal Z}}


\newcommand{\scra}{{\mathscr A}}
\newcommand{\scrb}{{\mathscr B}}
\newcommand{\scrc}{{\mathscr C}}
\newcommand{\scrd}{{\mathscr D}}
\newcommand{\scre}{{\mathscr E}}
\newcommand{\scrf}{{\mathscr F}}
\newcommand{\scrg}{{\mathscr G}}
\newcommand{\scrh}{{\mathscr H}}
\newcommand{\scri}{{\mathscr I}}
\newcommand{\scrj}{{\mathscr J}}
\newcommand{\scrk}{{\mathscr K}}
\newcommand{\scrl}{{\mathscr L}}
\newcommand{\scrm}{{\mathscr M}}
\newcommand{\scrn}{{\mathscr N}}
\newcommand{\scro}{{\mathscr O}}
\newcommand{\scrp}{{\mathscr P}}
\newcommand{\scrq}{{\mathscr Q}}
\newcommand{\scrr}{{\mathscr R}}
\newcommand{\scrs}{{\mathscr S}}
\newcommand{\scrt}{{\mathscr T}}
\newcommand{\scru}{{\mathscr U}}
\newcommand{\scrv}{{\mathscr V}}
\newcommand{\scrw}{{\mathscr W}}
\newcommand{\scrx}{{\mathscr X}}
\newcommand{\scry}{{\mathscr Y}}
\newcommand{\scrz}{{\mathscr Z}}


\newcommand{\AAA}{{\mathbb A}} 
\newcommand{\BB}{{\mathbb B}}
\newcommand{\CC}{{\mathbb C}}
\newcommand{\DD}{{\mathbb D}}
\newcommand{\EE}{{\mathbb E}}
\newcommand{\FF}{{\mathbb F}}
\newcommand{\GG}{{\mathbb G}}
\newcommand{\HH}{{\mathbb H}}
\newcommand{\II}{{\mathbb I}}
\newcommand{\JJ}{{\mathbb J}}
\newcommand{\KK}{{\mathbb K}}
\newcommand{\LL}{{\mathbb L}}
\newcommand{\MM}{{\mathbb M}}
\newcommand{\NN}{{\mathbb N}}
\newcommand{\OO}{{\mathbb O}}
\newcommand{\PP}{{\mathbb P}}
\newcommand{\QQ}{{\mathbb Q}}
\newcommand{\RR}{{\mathbb R}}
\newcommand{\SSS}{{\mathbb S}} 
\newcommand{\TT}{{\mathbb T}}
\newcommand{\UU}{{\mathbb U}}
\newcommand{\VV}{{\mathbb V}}
\newcommand{\WW}{{\mathbb W}}
\newcommand{\XX}{{\mathbb X}}
\newcommand{\YY}{{\mathbb Y}}
\newcommand{\ZZ}{{\mathbb Z}}


\newcommand{\tta}{\hbox{\tt a}}    \newcommand{\ttA}{\hbox{\tt A}}
\newcommand{\ttb}{\hbox{\tt b}}    \newcommand{\ttB}{\hbox{\tt B}}
\newcommand{\ttc}{\hbox{\tt c}}    \newcommand{\ttC}{\hbox{\tt C}}
\newcommand{\ttd}{\hbox{\tt d}}    \newcommand{\ttD}{\hbox{\tt D}}
\newcommand{\tte}{\hbox{\tt e}}    \newcommand{\ttE}{\hbox{\tt E}}
\newcommand{\ttf}{\hbox{\tt f}}    \newcommand{\ttF}{\hbox{\tt F}}
\newcommand{\ttg}{\hbox{\tt g}}    \newcommand{\ttG}{\hbox{\tt G}}
\newcommand{\tth}{\hbox{\tt h}}    \newcommand{\ttH}{\hbox{\tt H}}
\newcommand{\tti}{\hbox{\tt i}}    \newcommand{\ttI}{\hbox{\tt I}}
\newcommand{\ttj}{\hbox{\tt j}}    \newcommand{\ttJ}{\hbox{\tt J}}
\newcommand{\ttk}{\hbox{\tt k}}    \newcommand{\ttK}{\hbox{\tt K}}
\newcommand{\ttl}{\hbox{\tt l}}    \newcommand{\ttL}{\hbox{\tt L}}
\newcommand{\ttm}{\hbox{\tt m}}    \newcommand{\ttM}{\hbox{\tt M}}
\newcommand{\ttn}{\hbox{\tt n}}    \newcommand{\ttN}{\hbox{\tt N}}
\newcommand{\tto}{\hbox{\tt o}}    \newcommand{\ttO}{\hbox{\tt O}}
\newcommand{\ttp}{\hbox{\tt p}}    \newcommand{\ttP}{\hbox{\tt P}}
\newcommand{\ttq}{\hbox{\tt q}}    \newcommand{\ttQ}{\hbox{\tt Q}}
\newcommand{\ttr}{\hbox{\tt r}}    \newcommand{\ttR}{\hbox{\tt R}}
\newcommand{\tts}{\hbox{\tt s}}    \newcommand{\ttS}{\hbox{\tt S}}
\newcommand{\ttt}{\hbox{\tt t}}    \newcommand{\ttT}{\hbox{\tt T}}
\newcommand{\ttu}{\hbox{\tt u}}    \newcommand{\ttU}{\hbox{\tt U}}
\newcommand{\ttv}{\hbox{\tt v}}    \newcommand{\ttV}{\hbox{\tt V}}
\newcommand{\ttw}{\hbox{\tt w}}    \newcommand{\ttW}{\hbox{\tt W}}
\newcommand{\ttx}{\hbox{\tt x}}    \newcommand{\ttX}{\hbox{\tt X}}
\newcommand{\tty}{\hbox{\tt y}}    \newcommand{\ttY}{\hbox{\tt Y}}
\newcommand{\ttz}{\hbox{\tt z}}    \newcommand{\ttZ}{\hbox{\tt Z}}

\newcommand{\phm}{\phantom}
\newcommand{\ds}{\displaystyle }
\newcommand{\smallstrut}{\vphantom{\vrule height 3pt }}
\def\bdm #1#2#3#4{\left(
\begin{array} {c|c}{\ds{#1}}
 & {\ds{#2}} \\ \hline
{\ds{#3}\vphantom{\ds{#3}^1}} &  {\ds{#4}}
\end{array}
\right)}
\newcommand{\wtd}{\widetilde }
\newcommand{\bsl}{\backslash }
\newcommand{\GL}{{\mathrm{GL}}}
\newcommand{\SL}{{\mathrm{SL}}}
\newcommand{\GSp}{{\mathrm{GSp}}}
\newcommand{\PGSp}{{\mathrm{PGSp}}}
\newcommand{\SP}{{\mathrm{Sp}}}
\newcommand{\SO}{{\mathrm{SO}}}
\newcommand{\SU}{{\mathrm{SU}}}
\newcommand{\Ind}{\mathrm{Ind}}
\newcommand{\Hom}{{\mathrm{Hom}}}
\newcommand{\Ad}{{\mathrm{Ad}}}
\newcommand{\Sym}{{\mathrm{Sym}}}
\newcommand{\Mat}{\mathrm{M}}
\newcommand{\sgn}{\mathrm{sgn}}
\newcommand{\trs}{\,^t\!}
\newcommand{\iu}{\sqrt{-1}}
\newcommand{\oo}{\hbox{\bf 0}}
\newcommand{\ono}{\hbox{\bf 1}}
\newcommand{\smallcirc}{\lower .3em \hbox{\rm\char'27}\!}
\newcommand{\bAf}{\bA_{\hbox{\eightrm f}}}
\newcommand{\thalf}{{\textstyle{\frac12}}}
\newcommand{\shp}{\hbox{\rm\char'43}}
\newcommand{\Gal}{\operatorname{Gal}}
\newcommand{\opeke}{\frko^\times}
\newcommand{\ktwo}{F/F^{\times 2}}
\newcommand{\tfourth}{\frac{1}{4}}
\newcommand{\bdel}{\bm\del}
\newcommand{\vsst}{V^{\mathrm{ss}}}
\theoremstyle{plain}
\newtheorem{theorem}{Theorem}[section]
\newtheorem{lemma}{Lemma}[section]
\newtheorem{proposition}{Proposition}[section]
\theoremstyle{definition}
\newtheorem{definition}{Definition}[section]
\newtheorem{conjecture}{Conjecture}[section]
\theoremstyle{remark}
\newtheorem{remark}{Remark}[section]
\newtheorem{corollary}{Corollary}
  \makeatletter
    \renewcommand{\theequation}{%
    \thesection.\arabic{equation}}
    \@addtoreset{equation}{section}
  \makeatother

%
\title[]{On the functional equation of the Siegel series}
\author{Tamotsu Ikeda}
\address{Graduate school of mathematics, Kyoto University, Kitashirakawa, Kyoto, 606-8502, Japan}
\email{ikeda@math.kyoto-u.ac.jp}
\subjclass[2010]{11E08, 11E45}
\maketitle
\begin{abstract}
It is well-known that the Fourier coefficients of Siegel-Eisenstein series can be expressed in terms of the Siegel series.
The functional equation of the Siegel series of a quadratic form over $\mathbb{Q}_p$ was first proved by Katsurada.
In this paper, we prove the functional equation of the Siegel series over a non-archimedean local field by using the representation theoretic argument by Kudla and Sweet.
\end{abstract}
\centerline{\textbf{Introduction}}
\bigskip

The theory of Siegel series was initiated by Siegel \cite{siegel35} to investigate the Fourier coefficients of the Siegel Eisenstein series.
Since then, many authors treated Siegel series.
Katsurada \cite{katsurada} gave an explicit formula for the Siegel series over $\QQ_p$.
To obtain the explicit formula, Katsurada proved a functional equation of the Siegel series, which is now called the Katsurada functional equation.
The purpose of this paper is to generalize Katsurada functional equation over an arbitrary local field of characteristic not $2$.

There are several proofs of the Katsurada functional equation over $\QQ_p$.
B\"ocherer and Kohnen \cite{boechererkohnen} used the global functional equation of the Siegel Eisenstein series.
The proof of Sato and Hironaka \cite{Satohironaka} used the theory of spherical functions.
In fact, Karel \cite{karel} has shown that there exists a functional equation by using the representation theory, but he did not calculate a precise form of the functional equation.
The precise form of the functional equation can be calculated by using the result of Sweet \cite{sweet} on the ``gamma matrix'' of a prehomogeneous vector space, in principle.

In this paper, we first reformulated the result of Sweet \cite{sweet} suitable for our purpose.
Let $\mathrm{Sym}_n(F)$ be the space of symmetric matrices over a non-archimedean local field $F$ of characteristic not $2$.
We will calculate a precise form of the local functional equation of the prehomogeneous vector space $\mathrm{Sym}_n(F)$.
Our method of the calculation is basically the same as that of Sato \cite{fumihiro}.

We now explain the content of this paper.
In section \ref{sec:1}, we give a preliminary result on the Weil constants and Tate's local factors.
In section \ref{sec:2}, we give a local functional equation (Theorem \ref{thm:2.1} and Theorem \ref{thm:2.2}) for the prehomogeneous vector space $\mathrm{Sym}_n(F)$.
In these theorems, we consider the zeta integrals with respect to a character $\ome$ of $F^\times$.
For $\ome=1$,  our functional equation reduces to the result of Sweet \cite{sweet}.
In section \ref{sec:3}, we explain the relation of the functional equation of the prehomogeneous vector space $\mathrm{Sym}_n(F)$ and that of the degenerate Whittaker functional of the degenerate principal series of $\mathrm{Sp}_n(F)$.
Note that this relation was established for unitary groups in Kudla and Sweet \cite{kudlasweet}.
Combining these results, we prove the functional equation of the Siegel series in section \ref{sec:4}.

I thank late Prof.~Hiroshi Saito for his kind advice.
I thank Prof.~Fumihiro Sato for his comment.

This research was partially supported by the JSPS KAKENHI Grant Number 26610005, 24540005.

\section{Weil constants and Tate's local factors}
\label{sec:1}

Let $F$ be a non-archimedean local field whose characteristic is not $2$.   
The maximal order of $F$ and its maximal ideal is denoted by $\frko$ and $\frkp$, respectively.
The number of elements of the residue field $\frkk=\frko/\frkp$ is denoted by $q$.
For $x\in F^\times$, we have $q^{-\mathrm{ord}x}=|x|$.
The Haar measure $dx$ on $F$ is normalized so that $\int_{\frko}dx=1$.
The Hilbert symbol of $F$ of degree 2 is denoted by $\langle\,\,,\,\,\rangle$. 
We put $F^{\times 2}=\{x^2\,|\, x\in F^\times\}$.
Similarly, put put $\frko^{\times 2}=\{x^2\,|\, x\in \frko^\times\}$.
It is well-known that $[F^\times:F^{\times 2}]=4\,|2|^{-1}$ and $[\frko^\times:\frko^{\times 2}]=2\,|2|^{-1}$.
For $\theta\in F^\times/F^{\times 2}$, we put $\chi_\theta(x)=\langle \theta,x\rangle$.  

We fix a non-trivial additive character $\psi$ of $F$.  
Let $c_\psi$ be the order of $\psi$, i.~e., $c_\psi$ is the maximal integer $c$ such that $\psi$ is trivial on $\frkp^{-c}$.
We fix an element $\bdel\in F^\times$ such that $\mathrm{ord}(\bdel)=c_\psi$.

For each Schwartz function $\phi\in\scrs(F)$, the Fourier transform $\hat \phi$ is defined by
\[
\hat\phi(x)=|\bdel|^{1/2}\int_{F}\phi(y)\psi(xy)\, dy.
\]
Note that  the Haar measure $|\bdel|^{1/2}dy$ is the self-dual Haar measure for the Fourier transform $\phi\mapsto \hat\phi$.
For each $a\in F^\times$, there exists a constant $\alp_\psi(a)$, called the Weil constant, which satisfies
\begin{equation}
\label{wc}
\int_{F} \phi(x)\psi(ax^2)\,dx=\alp_\psi(a)|2a|^{-1/2}\int_{F}\hat\phi(x)\psi\left(-\frac{x^2}{4a}\right)\,dx
\end{equation}
for any $\phi\in\scrs(F)$ (cf. Weil \cite{Weil:64}).
The Weil constant $\alp_\psi(a)$ depends only on the class of $a$ in $F^\times/F^{\times 2}$, and so the symbol $\alp_\psi(\theta)$ for $\theta\in  F^\times/F^{\times 2}$ is meaningful.  
Clearly, we have $\alp_\psi(-a)=\overline{\alp_\psi(a)}$.
It is easy to see $\alp_{\psi_\xi}(a)=\alp_\psi(\xi a)$ for $\xi\in F^\times$, where $\psi_\xi(x)=\psi(\xi x)$.
For any $a, b\in F^\times$, we have
\[
\frac{\alp_\psi(a)\alp_\psi(b)}{\alp_\psi(1)\alp_\psi(ab)}=\langle a, b\rangle.
\]
If there is no fear of confusion, we write $\alp(a)$ for $\alp_\psi(a)$.

\begin{lemma}
\label{lem:1.1}
For $y\in F^\times$, 
\[
\sum_{x\in F^\times/F^{\times 2}}\alp(x)\langle x,y \rangle = 2|2|^{-1/2}
\frac{\alp(1)}{\alp(y)}.
\]
\end{lemma}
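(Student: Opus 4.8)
The plan is to reduce the statement to the case $y=1$ by the cocycle relation, and then to evaluate the resulting sum $S(1):=\sum_{x\in F^\times/F^{\times 2}}\alp(x)$ by integrating a ``stable value'' of the Gaussian identity \eqref{wc} over $\frko^\times$ and over $\pi\frko^\times$. Write $S(y)=\sum_{x\in F^\times/F^{\times 2}}\alp(x)\langle x,y\rangle$, so the claim is $S(y)=2|2|^{-1/2}\alp(1)/\alp(y)$. From the cocycle relation $\alp(x)\alp(y)=\alp(1)\alp(xy)\langle x,y\rangle$ and $\langle x,y\rangle^{2}=1$ one gets $\alp(x)\alp(y)\langle x,y\rangle=\alp(1)\alp(xy)$; summing over $x$ and re-indexing $x\mapsto xy$ on the finite group $F^\times/F^{\times 2}$ gives
\[
S(y)\,\alp(y)=\alp(1)\!\!\sum_{x\in F^\times/F^{\times 2}}\!\!\alp(xy)=\alp(1)\,S(1).
\]
So everything reduces to showing $S(1)=2|2|^{-1/2}$. (In passing: expanding $S(1)\overline{S(1)}$ with $\overline{\alp(x)}=\alp(-x)$, applying the cocycle relation once more, and using orthogonality of the characters $x\mapsto\langle\tht,x\rangle$ on $F^\times/F^{\times 2}$ yields $S(1)^{2}=[F^\times:F^{\times 2}]=4|2|^{-1}$ painlessly; but this does not fix the sign, so I evaluate $S(1)$ directly instead.)

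Since $\alp_{\psi_\xi}(a)=\alp_\psi(\xi a)$ and every nontrivial additive character of $F$ is some $\psi_\xi$, the number $S(1)$ is independent of the choice of $\psi$; so I may assume $c_\psi=0$, i.e.\ $|\bdel|=1$ and $\psi$ is trivial precisely on $\frko$. Apply \eqref{wc} with $\phi=\mathbf 1_{\frkp^{-N}}$: then $\hat\phi=q^{N}\mathbf 1_{\frkp^{N}}$, and for $a$ in a fixed bounded subset of $F^\times$ and $N$ large enough one has $\psi(-x^2/4a)=1$ for all $x\in\frkp^{N}$, whence $\int_F\hat\phi(x)\psi(-x^2/4a)\,dx=q^{N}q^{-N}=1$. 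Thus, for $N\gg 0$,
\[
\int_{\frkp^{-N}}\psi(ax^{2})\,dx=\alp(a)\,|2a|^{-1/2}\qquad\text{for all }a\in\frko^\times\cup\pi\frko^\times .
\]

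Now the key point: summing this over a set of representatives of $F^\times/F^{\times 2}$ is useless, because its right side carries the factor $|2a|^{-1/2}$, which is \emph{not} a function of the square-class of $a$; one must integrate instead. Fix $N\gg 0$ and a uniformizer $\pi$, and put $A=\sum_{\eta\in\frko^\times/\frko^{\times 2}}\alp(\eta)$, $B=\sum_{\eta\in\frko^\times/\frko^{\times 2}}\alp(\pi\eta)$, so that $S(1)=A+B$. Integrating the displayed identity over $a\in\frko^\times$ and using Fubini,
\[
|2|^{-1/2}\!\int_{\frko^\times}\!\alp(a)\,da=\int_{\frko^\times}\!\!\int_{\frkp^{-N}}\!\psi(ax^{2})\,dx\,da=\int_{\frkp^{-N}}\!\Bigl(\int_{\frko^\times}\!\psi(ax^{2})\,da\Bigr)dx .
\]
An elementary computation gives $\int_{\frko^\times}\psi(ax^{2})\,da=(1-q^{-1})\mathbf 1_{\frko}(x)$, so the right side is $1-q^{-1}$; on the left, $\alp$ is constant on cosets of $\frko^{\times 2}$ and $\mathrm{vol}(\frko^{\times 2})=\mathrm{vol}(\frko^\times)/[\frko^\times:\frko^{\times 2}]=\tfrac12(1-q^{-1})|2|$, so $\int_{\frko^\times}\alp(a)\,da=\tfrac12(1-q^{-1})|2|\,A$, and comparing yields $A=2|2|^{-1/2}$. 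Running the same computation over $a\in\pi\frko^\times$, the substitution $a=\pi b$ with $b\in\frko^\times$ turns the inner double integral into $|\pi|\int_{\frkp^{-N}}\bigl(\int_{\frko^\times}\psi(\pi bx^{2})\,db\bigr)dx$, and since $\int_{\frko^\times}\psi(\pi bx^{2})\,db=\mathbf 1_{\frko}(x)-q^{-1}\mathbf 1_{\frkp^{-1}}(x)$ has integral $1-q^{-1}\!\cdot q=0$ over $\frkp^{-N}$, comparison with $\int_{\pi\frko^\times}\alp(a)|2a|^{-1/2}\,da$ forces $B=0$. Hence $S(1)=A+B=2|2|^{-1/2}$, which with the first reduction proves the lemma.

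The first step is purely formal. The substance is in the last two steps: extracting from \eqref{wc} a form of $\alp(a)$ that survives integration, and recognizing that one should integrate over $\frko^\times$ and over $\pi\frko^\times$ — rather than sum over square-class representatives — precisely so that the factor $|2a|^{-1/2}$ collapses to a constant. The only real friction I anticipate is the bookkeeping of normalizations ($|\bdel|$, the index $[\frko^\times:\frko^{\times 2}]=2|2|^{-1}$, and $\mathrm{vol}(\frko^{\times 2})$), which must be handled carefully when the residue characteristic is $2$ and $|2|<1$.
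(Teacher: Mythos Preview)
Your proof is correct and follows essentially the same route as the paper's: reduce to $y=1$ via the cocycle relation, assume $\psi$ has order $0$, split $S(1)$ into the contribution from unit square-classes and from $\pi\cdot(\text{unit square-classes})$, and evaluate each piece by integrating an instance of the Weil identity (\ref{wc}) over $\frko^\times$. The only cosmetic difference is which test function you feed into (\ref{wc}): the paper takes $\phi=\mathbf{1}_{\frko}$ (so that $\hat\phi=\mathbf{1}_{\frko}$ and one obtains $\alp(a)=|2a|^{-1/2}\int_{\frko}\psi(y^2/4a)\,dy$, leading to volume computations of sets like $\{y\in\frko:y^2\in 4\frko\}$), whereas you take $\phi=\mathbf{1}_{\frkp^{-N}}$ for $N\gg0$ (so that the right-hand side of (\ref{wc}) stabilizes to $1$ and one is left with $\int_{\frkp^{-N}}\psi(ax^2)\,dx=\alp(a)|2a|^{-1/2}$). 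The subsequent inner integrations over $\frko^\times$ are then literally the same computations in dual guise.
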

\begin{proof}
Since
\[
\sum_{x\in F^\times/F^{\times 2}}\alp(x)\langle x,y \rangle
=
\sum_{x\in F^\times/F^{\times 2}}\alp(x)
\frac{\alp(1)\alp(xy)}{\alp(x)\alp(y)}=\frac{\alp(1)}{\alp(y)}
\sum_{x\in F^\times/F^{\times 2}}\alp(x),
\]
it is enough to prove that
\[
\sum_{x\in F^\times/F^{\times 2}}\alp(x)=2|2|^{-1/2}.
\]
This was proved by Kahn \cite{kahn}.
We follow the argument of his first proof.
We may assume $\psi$ is of order 0.  
Let $\vpi$ be a prime element of $F$.
We choose a set $A$ of complete representatives of $\frko^\times/\frko^{\times 2}$.  Then $A\cup \vpi A$ is a set of complete representatives of $F^\times/F^{\times 2}$.
Then we have
\begin{align*}
\sum_{x\in A}\alp(x)=&\mathrm{Vol}(\frko^{\times 2})^{-1}\int_{x\in\frko^\times}\alp(x)\, dx,
\\
\sum_{x\in \vpi A}\alp(x)=&\mathrm{Vol}(\frko^{\times 2})^{-1}\int_{x\in\frko^\times}\alp(\vpi x)\, dx.
\end{align*}
Note that $\mathrm{Vol}(\frko^{\times 2})=[\frko^\times :\frko^{\times 2}]^{-1}\mathrm{Vol}(\frko^\times)=2^{-1}|2|(1-q^{-1})$.

Let $\phi_0$ be the characteristic function of $\frko$.
By putting $\phi=\phi_0$ in (\ref{wc}), we obtain 
\[
\alp(a)=|2a|^{-1/2} \int_{y\in \frko} \psi\left(\frac{y^2}{4a}\right)\, dy
\]
for $a\in\frko\setminus \{0\}$.
Then we have
\begin{align*}
\int_{x\in\frko^\times}\alp(x)\, dx
=&|2|^{-1/2}
\int_{\frko^\times} \int_{\frko} \psi(\frac{x y^2}4)\, dy\, dx \\
=&
|2|^{-1/2}
\left(
\int_{\frko} \int_{\frko} \psi(\frac{x y^2}4)\, dy\, dx-
\int_{\frkp} \int_{\frko} \psi(\frac{x y^2}4)\, dy\, dx
\right).
\end{align*}
Note that
\begin{align*}
\int_{\frko} \int_{\frko} \psi(\frac{x y^2}4)\, dy\, dx=&\mathrm{Vol}(\{y\in\frko\,|\, y^2\in4\frko\})=\mathrm{Vol}(2\frko)=|2|, \\
\int_{\frkp} \int_{\frko} \psi(\frac{x y^2}4)\, dy\, dx=&q^{-1}\mathrm{Vol}(\{y\in\frko\,|\, y^2\in4\frkp^{-1}\})=q^{-1}\mathrm{Vol}(2\frko)=|2|q^{-1}.
\end{align*}
It follows that
\[
\sum_{x\in A}\alp(x)=2|2|^{-1}(1-q^{-1})^{-1} |2|^{-1/2} |2|(1-q^{-1})=
2|2|^{-1/2}.
\]
Similarly, we have
\begin{align*}
\int_{x\in\frko^\times}\alp(\vpi x)\, dx
=&|2|^{-1/2}
\int_{\frko^\times} \int_{\frko} \psi(\frac{x y^2}{4\vpi})\, dy\, dx \\
=&
|2|^{-1/2}
\left(
\int_{\frko} \int_{\frko} \psi(\frac{x y^2}{4\vpi})\, dy\, dx-
\int_{\frkp} \int_{\frko} \psi(\frac{x y^2}{4\vpi})\, dy\, dx
\right).
\end{align*}
In this case, 
\begin{align*}
\int_{\frko} \int_{\frko} \psi(\frac{x y^2}{4\vpi})\, dy\, dx=&\mathrm{Vol}(\{y\in\frko\,|\, y^2\in4\frkp\})=\mathrm{Vol}(2\frkp)=|2|q^{-1}, \\
\int_{\frkp} \int_{\frko} \psi(\frac{x y^2}{4\vpi})\, dy\, dx=&q^{-1}\mathrm{Vol}(\{y\in\frko\,|\, y^2\in4\frko\})=q^{-1}\mathrm{Vol}(2\frko)=|2|q^{-1}.
\end{align*}
It follows that
\[
\sum_{x\in \vpi A}\alp(x)=0.
\]
Hence the lemma.
\end{proof}

For a character $\omega$ of $F^\times$, the $\vep$ and $L$ factor of $\omega$ are denoted by $\vep(s,\omega,\psi)$ and $L(s,\omega)$, respectively.  
We also use the notation 
\[
\vep'(s,\omega,\psi)=\vep(s,\omega,\psi)
\frac{L(1-s,\omega^{-1})}{L(s,\omega)}.
\]
Then Tate's local functional equation says
\[
\int_F \phi(x)\ome(x)|x|^{s-1}\,dx
=
\vep'(s,\ome, \psi)^{-1}
\int_F \hat\phi(x)\ome^{-1}(x)|x|^{-s}\,dx.
\]
It is well-known that 
\[
\vep'(s,\ome, \psi)\vep'(1-s,\ome^{-1}, \psi)=\ome(-1).
\]
When there is no fear of confusion, we write $\vep(s, \ome)$ (resp. $\vep'(s, \ome)$)  for $\vep(s, \ome, \psi)$ (resp. $\vep'(s, \ome, \psi)$).

\begin{lemma} 
\label{lem:1.2}
Let $\ome$ be a quasi-character of $F^\times$.
For $\rho\in F^\times$ and $\phi\in\scrs(F)$, we have
\begin{align*}
 &\int_{x\in \rho\cdot F^{\times 2}}\phi(x)\ome(x)|x|^{s-1}\,dx \\
=&4^{-1}|2|\sum_{\theta\in F^\times/F^{\times 2}}\chi_\theta(\rho)\vep'(s,\ome\chi_\theta)^{-1}\int_F \hat\phi(x)\ome^{-1}\chi_\theta(x)|x|^{-s}\,dx.
\end{align*}
\end{lemma}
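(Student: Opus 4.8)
The plan is to reduce the identity to Tate's local functional equation by Fourier-expanding the characteristic function of the coset $\rho\cdot F^{\times 2}$ over the finite group $F^\times/F^{\times 2}$.

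First I would recall that the Hilbert symbol induces a nondegenerate pairing on $F^\times/F^{\times 2}$, so that $\theta\mapsto\chi_\theta$ identifies $F^\times/F^{\times 2}$ with its own character group. Combined with $[F^\times:F^{\times 2}]=4|2|^{-1}$ and the orthogonality relations for characters of a finite abelian group, this yields, for every $x\in F^\times$,
\[
\mathbf{1}_{\rho\cdot F^{\times 2}}(x)=\frac{|2|}{4}\sum_{\theta\in F^\times/F^{\times 2}}\chi_\theta(\rho^{-1}x)=\frac{|2|}{4}\sum_{\theta\in F^\times/F^{\times 2}}\chi_\theta(\rho)\,\chi_\theta(x),
\]
where I used that each $\chi_\theta$ takes values in $\{\pm1\}$, so that $\chi_\theta(\rho^{-1})=\chi_\theta(\rho)$.

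Next, since $F\setminus F^\times$ has measure zero and the sum over $\theta$ is finite, I would substitute this expansion into the left-hand side and interchange the (finite) sum with the integral, obtaining
\[
\int_{x\in\rho\cdot F^{\times 2}}\phi(x)\ome(x)|x|^{s-1}\,dx=\frac{|2|}{4}\sum_{\theta\in F^\times/F^{\times 2}}\chi_\theta(\rho)\int_F\phi(x)\,(\ome\chi_\theta)(x)|x|^{s-1}\,dx.
\]
For each $\theta$, $\ome\chi_\theta$ is again a quasi-character of $F^\times$ and $\phi\in\scrs(F)$, so Tate's local functional equation (as recalled above) applies and turns the inner integral into $\vep'(s,\ome\chi_\theta)^{-1}\int_F\hat\phi(x)\,(\ome\chi_\theta)^{-1}(x)|x|^{-s}\,dx$. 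Finally, since $\chi_\theta^2=1$ we have $(\ome\chi_\theta)^{-1}=\ome^{-1}\chi_\theta$, and substituting this back produces exactly the claimed identity, with the constant $4^{-1}|2|$.

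The only step that is not purely formal is the first one: one must know that the $\chi_\theta$ exhaust the character group of $F^\times/F^{\times 2}$ — equivalently, the nondegeneracy of the Hilbert symbol modulo squares — and one must carry the normalizing factor $|F^\times/F^{\times 2}|^{-1}=4^{-1}|2|$ through correctly. Everything after that is a term-by-term invocation of Tate's functional equation. As usual, Tate's equation is read here either for $s$ in the region of absolute convergence or as an identity of meromorphic functions in $s$, which is precisely how it is stated above.
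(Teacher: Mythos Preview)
Your proof is correct and follows exactly the same route as the paper: expand the characteristic function of the square-class coset via orthogonality of the characters $\chi_\theta$ of $F^\times/F^{\times 2}$ (using $[F^\times:F^{\times 2}]=4|2|^{-1}$), then apply Tate's functional equation term by term. You have simply made the orthogonality step and the use of $\chi_\theta^2=1$ more explicit than the paper does.
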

\begin{proof}
Since $[F^\times:F^{\times 2}]=4|2|^{-1}$, we have
\begin{align*}
 &\int_{x\in \rho\cdot F^{\times 2}}\phi(x)\ome(x)|x|^{s-1}\,dx \\
=&4^{-1}|2|\sum_{\theta\in F^\times/F^{\times 2}}\chi_\theta(\rho)
\int_F \phi(x)\ome\chi_\theta(x)|x|^{s-1}\,dx.
\end{align*}
Then the lemma follows from Tate's functional equation.
\end{proof}

\begin{lemma} 
\label{lem:1.3}
Let $\ome$ be a quasi-character of $F^\times$.  Then we have
\[
\sum_{\tht\in F^\times/F^{\times 2}}\,{\frac{\alp(1)}{\alp(\tht)}}
\vep'(s,\ome\chi_\tht)^{-1}
=2\,|2|^{-2s}\,\ome^{-1}(4)
\vep'(2s,\ome^{2})^{-1}\vep'(s+\frac12,\ome).
\]
\end{lemma}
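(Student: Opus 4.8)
The plan is to follow Sato's zeta–integral method: attach to the ``quadratic form'' $x\mapsto x^2$ on $F$ a one–variable local zeta integral, evaluate it in two ways, and compare. For $\phi\in\scrs(F)$ set
\[
I(\phi,s)=\int_{F^\times}\Bigl(\int_F\phi(u)\psi(tu^2)\,du\Bigr)\ome(t)|t|^{s-1}\,dt.
\]
By the Weil identity (\ref{wc}) the inner integral is bounded as $t\to 0$ and is $O(|t|^{-1/2})$ as $|t|\to\infty$, so $I(\phi,s)$ converges absolutely on the strip $0<\mathrm{Re}(s)<\tfrac12$ and is a finite sum of rational functions of $q^{-s}$; the identity we prove on this strip will then hold for all $s$ by meromorphic continuation. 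We may and do assume $\psi$ has order $0$ (both sides of the asserted identity scale the same way under $\psi\mapsto\psi_\xi$). Two facts will be used repeatedly: Tate's functional equation as recalled above, and the formula $\int_F\psi(t)\mu(t)|t|^{s-1}\,dt=\mu(-1)\,\vep'(s,\mu)^{-1}$ for an arbitrary quasi-character $\mu$ of $F^\times$, which one gets from Tate's functional equation by taking $\phi=q^N\mathbf 1_{1+\frkp^N}$ (so that $\hat\phi(t)=\psi(t)\mathbf 1_{\frkp^{-N}}(t)$) and letting $N\to\infty$.

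\emph{First evaluation.} Integrating first in $t$, the change of variables $t\mapsto u^{-2}t$ and the $\psi$-integral formula give $\int_{F^\times}\psi(tu^2)\ome(t)|t|^{s-1}\,dt=\ome^{-2}(u)|u|^{-2s}\,\ome(-1)\vep'(s,\ome)^{-1}$, so $I(\phi,s)=\ome(-1)\vep'(s,\ome)^{-1}\int_F\phi(u)\ome^{-2}(u)|u|^{-2s}\,du$. Applying Tate's functional equation for the character $\ome^{-2}$ at $1-2s$, together with $\ome^2(-1)=1$ and $\vep'(w,\mu)\vep'(1-w,\mu^{-1})=\mu(-1)$, this becomes
\[
I(\phi,s)=\ome(-1)\,\vep'(s,\ome)^{-1}\vep'(2s,\ome^2)\int_F\hat\phi(u)\ome^2(u)|u|^{2s-1}\,du.
\]

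\emph{Second evaluation.} Rewrite the inner integral by (\ref{wc}) as $\alp(t)|2t|^{-1/2}\int_F\hat\phi(u)\psi(-u^2/(4t))\,du$ and substitute $t\mapsto r=-1/(4t)$, an involution of $F^\times$. Since $-1/(4t)=-r\cdot(2r)^{-2}$ lies in the square class of $-r$, one has $\alp(t)=\overline{\alp(r)}=\alp(r)^{-1}$, while $|2t|^{-1/2}=|2|^{1/2}|r|^{1/2}$, $\ome(t)=\ome(-1)\ome(4)^{-1}\ome(r)^{-1}$, $|t|^{s-1}=|4|^{1-s}|r|^{1-s}$ and $|dt|=|4|^{-1}|r|^{-2}|dr|$. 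Expand $\alp(r)^{-1}=\tfrac{|2|^{1/2}}{2\alp(1)}\sum_{x\in F^\times/F^{\times 2}}\alp(x)\langle x,r\rangle$ by Lemma \ref{lem:1.1}, interchange the finite sum with the integral, and evaluate each resulting $r$-integral just as in the first evaluation, now for the characters $\ome^{-1}\chi_x$ (using $\chi_x^2=1$); a short manipulation with $\vep'(w,\mu)\vep'(1-w,\mu^{-1})=\mu(-1)$ identifies $\int_{F^\times}\psi(r)(\ome^{-1}\chi_x)(r)|r|^{-s-1/2}\,dr$ with $\vep'(s+\tfrac12,\ome\chi_x)$. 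One obtains
\[
I(\phi,s)=\frac{|2|\,\ome(-1)\ome(4)^{-1}|4|^{-s}}{2\alp(1)}\Bigl(\int_F\hat\phi(u)\ome^2(u)|u|^{2s-1}\,du\Bigr)\sum_{x\in F^\times/F^{\times 2}}\alp(x)\,\vep'(s+\tfrac12,\ome\chi_x).
\]
Choosing $\phi$ so that $\int_F\hat\phi(u)\ome^2(u)|u|^{2s-1}\,du\neq 0$ (e.g.\ $\hat\phi=\mathbf 1_{c(1+\frkp^m)}$ with $m$ large) and comparing the two evaluations gives, after cancellation,
\[
\sum_{x\in F^\times/F^{\times 2}}\alp(x)\,\vep'(s+\tfrac12,\ome\chi_x)=\frac{2\alp(1)\ome(4)|4|^{s}}{|2|}\,\vep'(s,\ome)^{-1}\vep'(2s,\ome^2).
\]

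To finish, apply this identity with $\ome$ replaced by $\ome^{-1}$ and $s$ by $\tfrac12-s$. Rewrite the left side of the lemma using $\tfrac{\alp(1)}{\alp(\theta)}=\alp(-1)\chi_{-1}(\theta)\alp(\theta)$ (from the product formula for $\alp$ together with $\alp(-a)=\overline{\alp(a)}$ and $|\alp(a)|=1$) and $\vep'(s,\ome\chi_\theta)^{-1}=\ome(-1)\chi_{-1}(\theta)\vep'(1-s,\ome^{-1}\chi_\theta)$; these collapse it to $\alp(-1)\ome(-1)\sum_\theta\alp(\theta)\vep'(1-s,\ome^{-1}\chi_\theta)$. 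Inserting the transformed identity and simplifying with $\alp(1)\alp(-1)=1$, $|4|^{1/2-s}=|2|^{1-2s}$, $\vep'(1-2s,\ome^{-2})=\vep'(2s,\ome^2)^{-1}$ and $\vep'(\tfrac12-s,\ome^{-1})^{-1}=\ome(-1)\vep'(s+\tfrac12,\ome)$ yields exactly the stated formula. The main obstacle is not any single computation but the analytic bookkeeping: the interchanges of integration above involve double integrals that are not absolutely convergent (already $\int_{F^\times}|t|^{\mathrm{Re}(s)-1}\,dt$ diverges), so they must be justified by truncating to $q^{-N}\le|t|\le q^N$ and invoking the established decay of the inner integral, and one must track square classes and Hilbert-symbol signs carefully through the substitution $t\mapsto-1/(4t)$ and through the application of Lemma \ref{lem:1.1}.
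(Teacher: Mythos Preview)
Your argument is essentially the paper's (which follows Rallis--Schiffmann): both evaluate the double integral $\int_{F^\times}\bigl(\int_F\varphi(u)\psi(tu^2)\,du\bigr)\mu(t)|t|^{w}\,d^\times t$ in two ways via Tate's functional equation and the Weil identity (\ref{wc}), the paper merely packaging this through the auxiliary functions $\Phi_1(y)=2|y|^{1/2}[\varphi(\sqrt y)+\varphi(-\sqrt y)]$ and $\Phi_2$ so that the object being transformed is $\int\widehat{\Phi_1}(x)\omega^{-1}(x)|x|^{-s+1/2}\,d^\times x$. The one substantive difference is that the paper imposes $\varphi(0)=\hat\varphi(0)=0$ from the outset, which forces the inner integral $t\mapsto\int_F\varphi(u)\psi(tu^2)\,du$ to be compactly supported in $F^\times$ (it equals $\int\varphi=0$ for $|t|$ small, and after Weil it is $\alpha(t)|2t|^{-1/2}\int\hat\varphi=0$ for $|t|$ large), so every exchange of integration is absolutely convergent and the truncation you worry about at the end becomes unnecessary; imposing the same vanishing on your $\phi$---which is compatible with the nonvanishing of $\int\hat\phi(u)\omega^2(u)|u|^{2s-1}\,du$ you need---would let you drop that caveat entirely.
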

\begin{proof}
We follow the argument of  Rallis and Schiffmann [RS].
Choose $\vph\in\scrs(F)$ such that
\[
\vph(0)=\hat\vph(0)=0,
\]
and
\[
\int_{F^\times} \vph(x)\ome^2(x)|x|^{2s}\, d^\times\! x\not\equiv 0.
\]
Here, $d^\times\!x=|x|^{-1}dx$.
Put
\begin{align*}
\Phi_1(x)&=\begin{cases} 2|x|^{1/2}[\vph(\sqrt{x})+\vph(-\sqrt{x})] & \text{ if $x\in F^{\times 2}$,} \\
0& \text{ otherwise, }
\end{cases} \\
\Phi_2(x)&=\begin{cases} 2|x|^{1/2}[\hat\vph(\sqrt{x})+\hat\vph(-\sqrt{x})] & \text{ if $x\in F^{\times 2}$,} \\
0& \text{ otherwise. }
\end{cases}
\end{align*}
Then we have $\Phi_1, \Phi_2\in\scrs(F)$ and
\begin{align*} \int_F \Phi_1(y)\psi(xy)\, dy =& \int_F \vph(y)\psi(xy^2)\,dy \\
=&\alp(x)|2x|^{-{1/2}} \int_F \hat\vph(y)\psi(-\frac{y^2}{4x})\,dy \\
=&\alp(x)|2x|^{-{1/2}} \int_F \Phi_2(y)\psi(-\frac{y}{4x})\,dy.
\end{align*}
It follows that
\begin{equation}
\label{eq:2}
\widehat{\Phi_1}(x)=\alp(x)|2x|^{-{1/2}} \widehat{\Phi_2}(-\frac{1}{4x})
\end{equation}
We have a functional equation
\begin{align*}\int_{F^\times} \widehat{\Phi_1}(x) \ome^{-1}(x) |x|^{-s+(1/2)} \, d^\times \!x=&\vep'(s+\frac{1}{2},\ome)\int_{F^\times} \Phi_1(x) \ome(x) |x|^{s+(1/2)} \, {d}^\times \!x \\
=&\vep'(s+\frac{1}{2},\ome)\int_{F^\times} \vph(x) \ome^2(x) |x|^{2s} \, {d}^\times \!x \\ 
=&\vep'(s+\frac{1}{2},\ome)\vep'(2s, \ome^2)^{-1} \\
&\quad \times
\int_{F^\times} \hat\vph(x) \ome^{-2}(x) |x|^{1-2s} \, {d}^\times \!x. 
\end{align*}
By the equation (\ref{eq:2}), the left hand side is 
\begin{align*} &\int_{F^\times} \widehat{\Phi_1}(x) \ome^{-1}(x) |x|^{-s+(1/2)} \, {d}^\times \!x \\
=&|2|^{-{1/2}}\int_{F^\times} \alp(x)\widehat{\Phi_2}(-\frac{1}{4x}) \ome^{-1}(x) |x|^{-s} \, {d}^\times \!x \\
=&|2|^{2s-(1/2)}\ome(-4)\int_{F^\times} \overline{\alp(x)}\,\widehat{\Phi_2}(x) \ome(x) |x|^{s} \, {d}^\times \!x \\
=&|2|^{2s-(1/2)}\ome(-4)
\sum_{\bet\in\ktwo}\overline{\alp(\bet)} \int_{\bet\cdot F^{\times 2}} \widehat{\Phi_2}(x) \ome(x) |x|^{s} \, {d}^\times \!x 
\end{align*}
Since $[F^\times:F^{\times 2}]=4|2|^{-1}$, we have
\begin{align*} 
& \sum_{\bet\in\ktwo}\overline{\alp(\bet)}\int_{\bet\cdot F^{\times 2}} \widehat{\Phi_2}(x) \ome(x) |x|^{s} \, {d}^\times \!x \\
=&4^{-1}|2|\sum_{\bet,\tht\in\ktwo} \overline{\alp(\bet)}\chi_\bet(\tht) \int_{F^\times} \widehat{\Phi_2}(x) \ome\chi_\tht(x) |x|^{s} \, {d}^\times \!x.
\end{align*}
By Lemma \ref{lem:1.1} and Tate's functional equation, we have
\begin{align*}
&4^{-1}|2|\sum_{\bet,\tht\in\ktwo} \overline{\alp(\bet)}\chi_\bet(\tht) \int_{F^\times} \widehat{\Phi_2}(x) \ome\chi_\tht(x) |x|^{s} \, {d}^\times \!x \\
=&2^{-1}|2|^{1/2}\sum_{\tht\in\ktwo} {\frac{\alp(\tht)}{\alp(1)}} \int_{F^\times} \widehat{\Phi_2}(x) \ome\chi_\tht(x) |x|^{s} \, {d}^\times \!x \\
=&2^{-1}|2|^{1/2}\sum_{\tht\in\ktwo} {\frac{\alp(\tht)}{\alp(1)}} 
\vep'(1-s,\ome^{-1}\chi_\tht,\psi)
\int_{F^\times} \Phi_2(x) \ome^{-1}\chi_\tht(x) |x|^{1-s} \, {d}^\times \!x \\
=&2^{-1}|2|^{1/2}\ome(-1)\sum_{\tht\in\ktwo} {\frac{\alp(\tht)}{\alp(1)}}\chi_\tht(-1) 
\vep'(s,\ome\chi_\tht,\psi)^{-1} \\
&\quad\times\int_{F^\times} \hat\vph(x) \ome^{-2}(x) |x|^{1-2s} \, {d}^\times \!x \\
=&2^{-1}|2|^{1/2}\ome(-1)\sum_{\tht\in\ktwo} {\frac{\alp(1)}{\alp(\tht)}}\vep'(s,\ome\chi_\tht,\psi)^{-1} \\
&\quad\times\int_{F^\times} \hat\vph(x) \ome^{-2}(x) |x|^{1-2s} \, {d}^\times \!x.
\end{align*}
This proves the lemma.
\end{proof}

\begin{lemma} 
For $\phi\in \scrs(F)$, 
\begin{align*}
 & \int_F \phi(x) \overline{\alp(x)}\ome(x)|x|^{s-1}\,d x \\
= & |2|^{-2s+(1/2)}\overline{\alp(1)}\ome^{-1}(4)\vep'(2s,\ome^2, \psi)^{-1} \\
&\quad\times
\sum_{\beta \in F^{\times}/ F^{\times 2}} \overline{\alp(\beta)} \vep'(s+\frac{1}{2}, \ome\chi_{-\beta}, \psi) 
\int_{x\in\beta \cdot F^{\times 2}}  \hat\phi(x) \ome^{-1}(x)|x|^{-s}\,d x 
\end{align*}
\end{lemma}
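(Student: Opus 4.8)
The plan is to decompose the left-hand integral along the square classes of $F^\times$, evaluate each piece by Tate's functional equation in the form of Lemma~\ref{lem:1.2}, carry out the resulting sum of Weil constants by the complex conjugate of Lemma~\ref{lem:1.1}, and finally recognize what remains as an instance of Lemma~\ref{lem:1.3} for a twisted quasi-character. All identities below are meant in the sense of meromorphic continuation in $s$, as in the previous lemmas.

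\emph{Step 1.} Since $\overline{\alp(x)}=\alp(-x)$ depends only on the image of $x$ in $F^\times/F^{\times2}$, one writes
\[
\int_F \phi(x)\overline{\alp(x)}\ome(x)|x|^{s-1}\,dx
=\sum_{\rho\in F^\times/F^{\times2}}\overline{\alp(\rho)}\int_{x\in\rho\cdot F^{\times2}}\phi(x)\ome(x)|x|^{s-1}\,dx ,
\]
and applies Lemma~\ref{lem:1.2} to each inner integral. Interchanging the two finite sums, the inner $\rho$-sum is $\sum_\rho\overline{\alp(\rho)}\chi_\theta(\rho)=\sum_\rho\overline{\alp(\rho)}\langle\rho,\theta\rangle$, which by the complex conjugate of Lemma~\ref{lem:1.1} (legitimate since $\langle\,,\,\rangle$ and $2|2|^{-1/2}$ are real) equals $2|2|^{-1/2}\,\overline{\alp(1)}/\overline{\alp(\theta)}$. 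One is left with
\[
2^{-1}|2|^{1/2}\,\overline{\alp(1)}\sum_{\theta\in F^\times/F^{\times2}}\overline{\alp(\theta)}^{-1}\vep'(s,\ome\chi_\theta)^{-1}\int_F \hat\phi(x)\ome^{-1}\chi_\theta(x)|x|^{-s}\,dx .
\]
Decomposing each integral over $F$ into the pieces over the cosets $\beta\cdot F^{\times2}$, on which $\chi_\theta$ has the constant value $\chi_\theta(\beta)$, and interchanging the $\theta$- and $\beta$-sums, the assertion reduces to the scalar identity
\[
\sum_{\theta\in F^\times/F^{\times2}}\frac{\chi_\theta(\beta)}{\overline{\alp(\theta)}}\,\vep'(s,\ome\chi_\theta)^{-1}
=2|2|^{-2s}\,\overline{\alp(\beta)}\,\ome^{-1}(4)\,\vep'(2s,\ome^2)^{-1}\,\vep'\!\left(s+\tfrac12,\ome\chi_{-\beta}\right)
\]
for each $\beta\in F^\times/F^{\times2}$.

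\emph{Step 2.} To prove this scalar identity, I would use the consequences of the relation $\alp(a)\alp(b)/(\alp(1)\alp(ab))=\langle a,b\rangle$ together with $\langle a,-a\rangle=1$, namely $\alp(a)\alp(-a)=\alp(1)\alp(-1)$, $\alp(\theta)^2=\langle\theta,-1\rangle\alp(1)^2$, and $\chi_\theta(\beta)=\alp(\theta)\alp(\beta)/(\alp(1)\alp(\theta\beta))$, to rewrite $\chi_\theta(\beta)/\overline{\alp(\theta)}$ as a scalar independent of $\theta$ times $\langle\theta\beta,-1\rangle/\alp(\theta\beta)$. The substitution $\eta=\theta\beta$ turns the $\theta$-sum into a multiple of $\sum_\eta \langle\eta,-1\rangle\alp(\eta)^{-1}\vep'(s,\ome\chi_\beta\chi_\eta)^{-1}$, and a second substitution $\zeta=-\eta$ makes it a scalar multiple of $\sum_\zeta \alp(\zeta)^{-1}\vep'(s,\ome\chi_{-\beta}\chi_\zeta)^{-1}$, i.e.\ of $\alp(1)^{-1}$ times the left-hand side of Lemma~\ref{lem:1.3} for the quasi-character $\ome\chi_{-\beta}$. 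Applying Lemma~\ref{lem:1.3} and simplifying with $\chi_{-\beta}(4)=\langle-\beta,4\rangle=1$ and $(\ome\chi_{-\beta})^2=\ome^2$ produces the displayed right-hand side, once the accumulated scalar is identified with $\overline{\alp(\beta)}=\alp(-\beta)$; this last identification is the equality $\langle\beta,-1\rangle\alp(\beta)=\alp(1)^2\alp(-\beta)$, equivalently $\alp(1)\alp(-1)=|\alp(1)|^2=1$, the Weil constant being of absolute value $1$.

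\emph{Step 3.} Substituting the scalar identity back into the expression from Step 1 and collecting the numerical factors ($2^{-1}|2|^{1/2}\cdot 2|2|^{-2s}=|2|^{-2s+1/2}$) reproduces the claimed formula. The main difficulty is the Weil-constant and Hilbert-symbol bookkeeping in Step 2 together with the two square-class substitutions: the twist by $\chi_{-\beta}$ rather than $\chi_\beta$ on the right is produced exactly by the substitutions $\eta=\theta\beta$ and $\zeta=-\eta$, and the collapse of the right-hand $\vep$-factors to $\vep'(2s,\ome^2)^{-1}$ and the constant $\ome^{-1}(4)$ relies on $4$ and $\beta^2$ lying in $F^{\times2}$.
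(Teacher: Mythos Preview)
Your proof is correct and follows essentially the same approach as the paper's: both decompose the left-hand side over square classes, apply Lemma~\ref{lem:1.2}, collapse the $\rho$-sum via the conjugate of Lemma~\ref{lem:1.1}, split the Fourier-side integral over $\beta$-cosets, and then reduce the remaining $\theta$-sum to an instance of Lemma~\ref{lem:1.3} for $\ome\chi_{-\beta}$ by a change of variable in $F^\times/F^{\times2}$. The only difference is cosmetic: the paper performs the substitution $\theta\mapsto-\beta\theta$ in one step after rewriting $\frac{\alp(\theta)}{\alp(1)}\chi_\theta(\beta)=\overline{\alp(1)}\,\frac{\alp(1)}{\alp(-\beta\theta)\alp(\beta)}$, whereas you do it in two steps ($\eta=\theta\beta$, then $\zeta=-\eta$); the Weil-constant bookkeeping comes out the same, since your final scalar identity $\langle\beta,-1\rangle\alp(\beta)=\alp(1)^2\alp(-\beta)$ is exactly $\overline{\alp(\beta)}=\alp(\beta)^{-1}$ combined with $\alp(\beta)^4=\alp(1)^4$.
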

\medskip

\begin{proof}
By Lemma \ref{lem:1.2}, we have
\begin{align*} 
\int_F \phi(x) \overline{\alp(x)}\ome(x)|x|^{s-1}\,d x 
=&\sum_{\rho\in \ktwo} \overline{\alp(\rho)} \int_{x\in \rho\cdot F^{\times 2}}  \phi(x) \ome(x) |x|^{s-1} \,d x \\
=& 4^{-1}|2| \sum_{\rho,\tht,\beta\in \ktwo} \overline{\alp(\rho)}  \, \chi_\tht(\rho\beta) \, \vep'(s, \ome\chi_\tht)^{-1} \\
&\; \times
\int_{x\in \beta\cdot  F^{\times 2}} \hat\phi(x) \ome^{-1}(x) |x|^{-s}\,d x.
\end{align*}
By Lemma \ref{lem:1.1} and Lemma \ref{lem:1.3}, we have
\begin{align*}
&\sum_{\rho,\tht,\beta\in \ktwo} \overline{\alp(\rho)}  \, \chi_\tht(\rho\beta) \, \vep'(s, \ome\chi_\tht)^{-1} \\
=& 2|2|^{-1/2} 
\sum_{\tht,\beta \in \ktwo} 
\frac{\alp(\tht)}{\alp(1)}
 \, \chi_\tht(\beta)\,  \vep'(s, \ome\chi_\tht)^{-1} 
\\
=& 2|2|^{-1/2} \overline{\alp(1)} \sum_{\tht,\beta\in \ktwo} 
\frac{\alp(1)}{\alp(-\beta\tht)\alp(\beta)}  
\vep'(s, \ome\chi_\tht)^{-1} 
\\
=& 2|2|^{-1/2} \overline{\alp(1)} \sum_{\tht,\beta\in \ktwo} 
\frac{\alp(1)}{\alp(\tht)\alp(\beta)}  
\vep'(s, \ome\chi_{-\beta\tht})^{-1} 
\\
=& 4 |2|^{-2s-(1/2)} \overline{\alp(1)}\ome^{-1}(4)\vep'(2s,\ome^2)^{-1}\\
&\quad\times
\sum_{\beta\in F^{\times}/F^{\times 2}} 
{\overline{\alp(\beta)}} \vep'(s+\frac{1}{2}, \ome\chi_{-\beta}).
\end{align*}
This proves the lemma.
\end{proof}

\section{The space of symmetric matrices of rank $n$.}
\label{sec:2}

Let $V=\mathrm{Sym}_n(F)$.
Then $V$ is a prehomogeneous vector space under the action of an algebraic group $\GL_n$.  
The set of open orbits of $V$ is denoted by $\calo$. 
The set $\vsst=\mathrm{Sym}_n(F)^{\mathrm{ss}}$ of semi-stable elements of $V$ is the union of all the open orbits.
Then $Q\in \vsst$ if and only if $\det Q\neq 0$.
The open orbit containing $Q\in\vsst$ is denoted by $V_Q$.
For each $Q\in \vsst$, we put 
$D_Q=(-4)^{[{n/2}]}\det Q$.  
\begin{definition}
Let $Q\in\vsst$.
The Clifford invariant $\eta_Q$ is the Hasse invariant of the Clifford algebra (resp.~the even Clifford algebra) of $Q$ if $n$ is even (resp.~odd).
\end{definition}

\begin{lemma}
Assume that $Q\in\vsst$ is equivalent to  $\mathrm{diag}(q_1,\ldots, q_n)$.
If $n=2m+1$ is odd, then
\[
\eta_Q=\langle -1,-1\rangle^{{m(m+1)}/2}\langle\, (-1)^m,\det Q \rangle
\eps_Q.
\]
If $n=2m$ is even, then
\[
\eta_Q=\langle -1,-1\rangle^{{m(m-1)}/2}\langle\, (-1)^{m+1},\det Q \rangle
\eps_Q.
\]
Here, $\eps_Q=\prod_{1\leq i< j\leq n}\langle q_i, q_j\rangle$.
\end{lemma}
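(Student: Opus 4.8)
The plan is to reduce the claimed identity to the classical formula relating the Hasse–Witt invariant of a diagonal form to the Hasse invariant of its Clifford algebra, keeping careful track of the quadratic-form conventions. First I would recall the definition: for $Q$ equivalent to $\mathrm{diag}(q_1,\ldots,q_n)$, the Clifford algebra $C(Q)$ (resp. the even part $C^0(Q)$) is, up to Brauer equivalence, a tensor product of quaternion algebras whose class is determined by the $q_i$ together with $\det Q$ and $-1$; the Hasse invariant $\eta_Q$ is then by definition the class of $C(Q)$ in $\mathrm{Br}(F)[2]\cong\{\pm1\}$ when $n$ is even, and of $C^0(Q)$ when $n$ is odd. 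The quantity $\eps_Q=\prod_{i<j}\langle q_i,q_j\rangle$ is the Hasse–Witt invariant, so the lemma is simply the standard textbook expression (see e.g. Lam or Scharlau) of the Clifford invariant in terms of $\eps_Q$, $\det Q$ and the power of $\langle -1,-1\rangle$ coming from reassociating the graded tensor product.

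The key steps, in order: (1) Write $C(Q)\cong \bigotimes_{i=1}^n F[e_i]$ with $e_i^2=q_i$ and $e_ie_j=-e_je_i$; using the graded-to-ungraded tensor product formula, express the Brauer class of $C(Q)$ as $\prod_{i<j}\langle q_i,q_j\rangle$ times correction terms $\langle -1,-1\rangle^{\binom{n}{2}\text{-type exponent}}$ and $\langle -1,\prod q_i\rangle^{\text{exponent}}$ arising from turning a $\ZZ/2$-graded tensor product into an ordinary one. (2) Compute these exponents combinatorially: the number of ungraded "twists" is $\binom{n}{2}$ modulo the relevant parity, which specializes to $m(m+1)/2$ for $n=2m+1$ and $m(m-1)/2$ for $n=2m$; similarly the coefficient of $\det Q$ in the Hilbert symbol is $(-1)^m$ (odd case) or $(-1)^{m+1}$ (even case). (3) In the odd case, pass from $C(Q)$ to $C^0(Q)$: since $C(Q)\cong C^0(Q)\otimes F[z]$ with $z$ central and $z^2 = (-1)^{?}\det Q$, the even Clifford algebra differs from $C(Q)$ by exactly the factor accounting for the center, which is absorbed into the stated $\langle(-1)^m,\det Q\rangle$ term. (4) Check that $\eta_Q$ so computed depends only on the equivalence class of $Q$, which is automatic since both $\eps_Q$ and $\langle(-1)^{\ast},\det Q\rangle$ are invariants of the Witt class (the product $\eps_Q$ is the Hasse–Witt invariant, independent of the chosen diagonalization).

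The main obstacle I expect is bookkeeping the sign/parity exponents consistently with the paper's conventions — in particular getting the exponent of $\langle -1,-1\rangle$ and the sign $(-1)^m$ versus $(-1)^{m+1}$ right, since these depend on whether one uses $C(Q)$ or $C^0(Q)$, on the choice of sign in the Clifford relation $e_i^2=q_i$ versus $e_i^2=-q_i$, and on the normalization $D_Q=(-4)^{[n/2]}\det Q$ introduced just before the lemma. A clean way to manage this is to verify the formula directly in the smallest cases $n=1,2,3$ (where $C(Q)$ or $C^0(Q)$ is explicitly a field or a quaternion algebra) to pin down the constants, and then induct on $n$ by splitting off a one-dimensional summand $\langle q_n\rangle$, using $C(Q_1\perp Q_2)\cong C(Q_1)\hat\otimes C(Q_2)$ and tracking how $m\mapsto m, m+1$ and the exponents change when $n\mapsto n+1$. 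Once the constants are fixed by the base cases, the inductive step is a routine manipulation of Hilbert symbols.
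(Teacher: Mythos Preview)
Your proposal is sound, but note that the paper itself gives no proof at all: its entire ``proof'' is a one-line citation to Scharlau, \emph{Quadratic and Hermitian Forms}, Ch.~9, Remark 2.12, p.~333. So there is nothing to compare at the level of argument---the paper simply quotes the result as a known fact from the literature.

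What you have outlined is essentially the standard derivation of that formula as it appears in Scharlau (or Lam): express the Brauer class of $C(Q)$ via the graded tensor decomposition, track the correction factors coming from the passage between graded and ungraded tensor products, and for odd $n$ adjust for the center when passing to $C^0(Q)$. Your suggestion to pin down the exponents by checking small $n$ and then inducting by splitting off a one-dimensional summand is exactly how the bookkeeping is usually managed, and it works. The only caution is the one you already flagged: the exponents of $\langle -1,-1\rangle$ and the sign in $\langle(-1)^{m}\text{ or }(-1)^{m+1},\det Q\rangle$ are sensitive to conventions, so the base-case checks (say $n=1,2,3,4$) are not optional---they are what fixes the formula to match the paper's definition of $\eta_Q$. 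Once those agree, the induction is routine.
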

\begin{proof}
See Scharlau \cite{scharlau} Ch.~\!9, Remark 2.~\!\!12, p333.
\end{proof}

If $Q$ is equivalent to $\mathrm{diag}(q_1,\cdots, q_n)$, we set 
\[
\alp_Q(a)=\alp_{aQ}(1)=\prod_{i=1}^n \alp(q_i a).
\] 
Then we have
\begin{align}
\label{wcq}
\int_{F^n} \phi(x)\psi(Q[x])\,dx=\frac{\alp_Q(1)}{|2^n\det Q|^{1/2}} \int_{F^n}\hat\phi(x)
\psi\left(-\frac{Q^{-1}[x]}{4}\right)\,dx
\end{align}
Here, as usual, $Q[x]={}^t\!xQx$ and
\[
\hat\phi(x)=|\bdel|^{n/2}
\int_{F^n}\phi(y)\psi({}^t\!y\cdot x)\, dy.
\]

\begin{lemma}
\label{lem:2.1}
\begin{itemize}
\item[(1)]
For $Q\in \vsst$ and $x\in F^\times$, we have
\[
\alp_Q(x)=
\begin{cases}
\displaystyle
\frac{\alp(D_Q)}{\alp(1)} \eta_Q\,\alp(x)\chi_{D_Q}(x) & \text{ if $n$ is odd,} \\
\noalign{\vskip 6pt}
\displaystyle\frac{\alp(1)}{\alp(D_Q)} \eta_Q\,\chi_{D_Q}(x) & \text{ if $n$ is even.} 
\end{cases}
\]
\item[(2)]
For $Q\in\vsst$ and $\beta\in F^\times$, put $Q\oplus \beta=\begin{pmatrix} Q & 0 \\ 0 & \beta \end{pmatrix}$.
Then we have
\[
\eta_{Q\oplus \beta}=
\begin{cases}
\eta_Q\chi_{D_Q}(\beta) & \text{ if $n$ is odd,} \\
\eta_Q\,\chi_{D_Q}(-\beta) & \text{ if $n$ is even.} 
\end{cases}
\]
\end{itemize}
\end{lemma}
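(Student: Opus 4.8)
The plan is to reduce both parts to the explicit formula for $\eta_Q$ established in the preceding lemma, together with two standard facts about the Weil constant: the cocycle relation $\alp(a)\alp(b)=\alp(1)\alp(ab)\langle a,b\rangle$ recalled above, and the normalization $\alp(1)^4=\langle -1,-1\rangle$. The latter follows from the cocycle relation with $a=b=-1$, combined with $\alp(-1)=\overline{\alp(1)}$ and $|\alp(1)|=1$ (equivalently $\alp(a)\alp(-a)=1$). I will also use freely the elementary Hilbert-symbol identities $\langle a,a\rangle=\langle a,-1\rangle$ and $\langle ab,cd\rangle=\langle a,c\rangle\langle a,d\rangle\langle b,c\rangle\langle b,d\rangle$.

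First I would prove the multiplicative ``addition formula''
\[
\prod_{i=1}^n\alp(q_i)=\alp(1)^{n-1}\,\alp(q_1\cdots q_n)\prod_{1\le i<j\le n}\langle q_i,q_j\rangle\qquad(q_i\in F^\times),
\]
by an immediate induction on $n$ from the cocycle relation. Taking $q_1,\dots,q_n$ to be the entries of a diagonalization of $Q$ gives $\alp_Q(1)=\alp(1)^{n-1}\alp(\det Q)\,\eps_Q$, and applying the same formula to $xQ$ (whose diagonal entries are $xq_i$), after expanding each $\langle xq_i,xq_j\rangle$, gives
\[
\alp_Q(x)=\alp_{xQ}(1)=\alp(1)^{n-1}\,\alp(x^n\det Q)\,\eps_Q\,\langle \det Q,x\rangle^{\,n-1}\,\langle x,-1\rangle^{\binom{n}{2}}.
\]
Here $\binom{n}{2}\equiv m\pmod 2$, while $n-1$ is even (resp. odd) when $n=2m+1$ (resp. $n=2m$). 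I would then substitute the formula for $\eta_Q$ into the right-hand side of (1), treat the cases $n=2m+1$ and $n=2m$ separately, and use $D_Q=(-4)^{[n/2]}\det Q$ (so that $\chi_{D_Q}(x)=\langle(-1)^m\det Q,x\rangle$) together with the cocycle relation to rewrite $\alp(x^n\det Q)$ and $\alp(D_Q)$. One checks that the factors $\eps_Q$, $\alp(x)$ and $\chi_{D_Q}(x)$ cancel on the two sides, leaving in the odd case the scalar identity $\alp(1)^{2m+1}=\alp((-1)^m)\,\langle -1,-1\rangle^{m(m+1)/2}$, and in the even case (after also absorbing a factor $\alp(\det Q)\alp(D_Q)$ via the cocycle relation) the identity $\alp(1)^{2m-1}\alp((-1)^m)=\langle -1,-1\rangle^{m(m-1)/2}$. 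Both are verified from $\alp(1)^4=\langle -1,-1\rangle$ and $\alp((-1)^m)=\alp(1)^{\pm1}$ by comparing exponents of the eighth root of unity $\alp(1)$ modulo $8$, distinguishing the parity of $m$.

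For part (2) I would use $Q\oplus\bet\sim\mathrm{diag}(q_1,\dots,q_n,\bet)$, so that $\det(Q\oplus\bet)=\bet\det Q$ and $\eps_{Q\oplus\bet}=\eps_Q\,\langle\det Q,\bet\rangle$, while the rank increases by one and hence switches parity. Inserting these into the $\eta$-formula of the preceding lemma---with the running integer $m$ shifted by the appropriate amount---and carrying out the Hilbert-symbol bookkeeping, the quotient $\eta_{Q\oplus\bet}/\eta_Q$ simplifies to $\langle(-1)^m\det Q,\bet\rangle$ when $n$ is odd and to $\langle(-1)^m\det Q,-\bet\rangle$ when $n$ is even. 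Since $D_Q=(-4)^{[n/2]}\det Q$ and $4^{[n/2]}$ is a square, these equal $\chi_{D_Q}(\bet)$ and $\chi_{D_Q}(-\bet)$ respectively, which is the assertion.

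The only real difficulty is the bookkeeping: keeping track of the powers of $\langle -1,-1\rangle$, the alternation between $(-1)^m$ and $(-1)^{m+1}$ in the $\eta_Q$-formula, and the exponents of $\alp(1)$ modulo $8$, so that every parity-dependent factor cancels correctly. No idea beyond the two properties of $\alp$ recalled at the outset is needed.
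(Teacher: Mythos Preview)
Your proposal is correct and follows essentially the same route as the paper: both establish the product formula $\alp_Q(1)=\alp(1)^{n-1}\alp(\det Q)\,\eps_Q$ from the cocycle relation, reduce part (1) to the scalar identities $\alp(1)^{2m+1}\overline{\alp((-1)^m)}=\langle -1,-1\rangle^{m(m+1)/2}$ (odd case) and its even analogue, and handle part (2) by direct substitution into the $\eta_Q$-formula. The only cosmetic difference is that the paper first isolates the $x$-dependence by computing the ratio $\alp_Q(x)/\alp_Q(1)$ and then treats $x=1$, whereas you carry the variable $x$ through the whole computation; the underlying algebra is identical.
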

\begin{proof}
We first prove (1).
Since $\frac{\alp(ax)}{\alp(a)}=\frac{\alp(x)}{\alp(1)}\langle a, x\rangle$, we have
\[
\frac{\alp_Q(x)}{\alp_Q(1)}=
\begin{cases}
\displaystyle
\frac{\alp(x)}{\alp(1)}\chi_{D_Q}(x) & \text{ if $n$ is odd,} \\ 
\noalign{\vskip 2pt}
\displaystyle
\chi_{D_Q}(x) & \text{ if $n$ is even} 
\end{cases}
\]
Thus we may assume $x=1$.
One can easily show 
\[
\alp_Q(1)=\alp(1)^{n-1}\alp(\det Q)\eps_Q,
\] 
where $\eps_Q=\prod_{1\leq i< j\leq n}\langle q_i, q_j\rangle$.
It is also easy to see that
\[
\alp(1)^{2m+1}\overline{\alp((-1)^m)}=\langle -1, -1 \rangle^{{m(m+1)}/2}.
\]
It follows that
\begin{align*}
\alp(-1)^{2m}\alp(a)
=&\langle -1, -1 \rangle^{{m(m+1)}/2}\overline{\alp(1)}\alp((-1)^m)\alp(a)
\\
=&\langle -1, -1 \rangle^{{m(m+1)}/2}\langle (-1)^m, a\rangle \alp((-1)^m a).
\end{align*}
If $n=2m+1$ is odd, we have
\begin{align*}
 \alp_Q(1)
=&\alp(1)^{2m}\alp(\det Q)\eps_Q \\                  
=&\langle -1,-1\rangle^{{m(m+1)}/2} \, \langle\, (-1)^m,\det Q \rangle \alp(D_Q) \eps_Q \\
=&
\eta_Q\alp(D_Q).
\end{align*}
Similarly, if $n=2m$ is even, we have
\begin{align*} 
\alp_Q(1) 
=&\alp(1)^{2m-1}\alp(\det Q)\eps_Q \\
=&\langle -1,-1\rangle^{m(m-1)/2} \, \langle\, (-1)^{m-1},\det Q \rangle \alp((-1)^{m-1}\det Q)\alp(1) 
\eps_Q \\
=&\eta_Q
\frac{\alp(1)}{\alp(D_Q)} . 
\end{align*} 
This proves (1).
If $n=2m+1$ is odd, we have
\begin{align*}
\eta_{Q\oplus\beta}=&
\langle -1,-1\rangle^{m(m+1)/2} \, \langle\, (-1)^{m},\beta\det Q \rangle\, \eps_Q \langle \det Q, \beta\rangle \\
=&\eta_Q\chi_{D_Q}(\beta).
\end{align*}
If $n=2m$ is even, we have
\begin{align*}
\eta_{Q\oplus\beta}=&
\langle -1,-1\rangle^{m(m+1)/2} \, \langle\, (-1)^m ,\beta\det Q \rangle\, \eps_Q \langle \det Q, \beta\rangle \\
=&\eta_Q\chi_{D_Q}(-\beta).
\end{align*}
Thus the lemma is proved.
\end{proof}

For $\Phi\in \scrs(V)$, we put
\[
\widehat\Phi(x)=\int_V \Phi(y)\psi(\mathrm{tr}(xy))dy.
\]
Here the measure $dx$ is the self dual measure for the Fourier transform, i.e., $dx=|2|^{{{n(n-1)}/4}}|\bdel|^{n(n+1)/2}\prod_{i=1}^n dx_{ii} \prod_{i<j}dx_{ij}$. 
Put $\sig=(n+1)/2$.
From the prehomogeneity of $V$, there is a meromorphic function $c_Q(\ome,s)$ such that
\begin{align*} 
&\int_{V} \Phi(X)\ome(\det X)\,|\det X|^{s-\sig}\,d X \\
=&\sum_{Q\in\calo}c_Q(\ome,s)\int_{V_Q}\widehat\Phi(X) \ome^{-1}(\det X) |\det X|^{-s}\,d X.
\end{align*}
Without a loss of generality, we may assume $\ome$ is unitary.
The left hand side is absolutely convergent for $\mathrm{Re}(s)>(n-1)/2$.

\begin{theorem}  
\label{thm:2.1}
If $n=2m+1$, then we have
\begin{align*}
 c_Q(\ome, s)=& \vep'(s-m,\ome)^{-1}\prod_{r=1}^{m}\vep'(2s-2m-1+2r,\ome^2)^{-1}  \\
&\times |2|^{-2ms+{{m(2m+1)}/2}} \ome^{-m}(4) \eta_Q.
\end{align*}
If $n=2m$, then we have
\begin{align*} 
c_Q(\ome, s)=&  \vep'(s-m+\thalf,\ome)^{-1}\prod_{r=1}^{m}\vep'(2s-2m+2r,\ome^2)^{-1} \\
 & \times |2|^{-2ms+{{m(2m-1)}/2}} \ome^{-m}(4) 
\frac{\alp(D_Q)}{\alp(1)} 
\vep'(s+\frac{1}{2}, \ome\chi_{D_Q}).
\end{align*}

\end{theorem}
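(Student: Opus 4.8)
The plan is to prove the formula by induction on $n$ by Gauss reduction (completing the square), following the method of F.~Sato \cite{fumihiro}; the parity of $n$ flips at each step, so the odd and even cases are proved simultaneously. The base case $n=1$ is Tate's local functional equation itself: there $\sig=1$, $V=F$, the open orbits are the square classes $\rho F^{\times2}$, and Tate's functional equation forces $c_Q(\ome,s)=\vep'(s,\ome)^{-1}$; this agrees with the asserted formula because for $n=1$, i.e.\ $m=0$, one has $\eta_Q=1$ and the indicated products over $r$ are empty.

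For the inductive step assume the theorem for $\mathrm{Sym}_{n-1}$ and write $X=\bigl(\begin{smallmatrix}\beta & {}^tb\\ b & C\end{smallmatrix}\bigr)$ with $\beta\in F$, $b\in F^{n-1}$, $C\in\mathrm{Sym}_{n-1}$. On the conull locus $\beta\neq0$ put $Y=C-\beta^{-1}b\,{}^tb$, so that $X$ is $\GL_n$-equivalent to $\beta\oplus Y$, $\det X=\beta\det Y$, the substitution $(\beta,b,C)\mapsto(\beta,b,Y)$ preserves measure, and the self-dual measure on $V$ factors as the product of the self-dual measures for the characters $\psi$, $\psi(2\,{}^tb\,\cdot)$ and $\psi(\mathrm{tr}(\cdot))$ on the three blocks. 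Since $\sig_n=\sig_{n-1}+\tfrac12$, the left-hand side of the functional equation becomes an iterated integral whose inner $Y$-integral is the $\mathrm{Sym}_{n-1}$-zeta integral, at the shifted parameter $s-\tfrac12$, of $\Phi_{\beta,b}(Y):=\Phi\bigl(\begin{smallmatrix}\beta & {}^tb\\ b & \beta^{-1}b\,{}^tb+Y\end{smallmatrix}\bigr)$. Applying the inductive hypothesis together with the identity $\widehat{\Phi_{\beta,b}}(Y^*)=\psi(-\beta^{-1}Y^*[b])\,(\calf_1\Phi)(\beta,b,Y^*)$, where $\calf_1$ is the Fourier transform in the lower-right $\mathrm{Sym}_{n-1}$-block, reduces everything to evaluating the remaining integrals over $b$ and over $\beta$, with $Y^*$ running over the orbits of $\mathrm{Sym}_{n-1}$.

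The $b$-integral $\int_{F^{n-1}}\psi(-\beta^{-1}Y^*[b])(\calf_1\Phi)(\beta,b,Y^*)\,db$ is a quadratic Gaussian integral; by (\ref{wcq}) it produces the Weil factor $\alp_{-\beta^{-1}Y^*}(1)=\alp_{Y^*}(-\beta^{-1})$, times an explicit power of $|2|$, applied to a further partial Fourier transform of $\Phi$, and Lemma~\ref{lem:2.1}(1) rewrites this Weil factor as $\tfrac{\alp(1)}{\alp(D_{Y^*})}\eta_{Y^*}\chi_{D_{Y^*}}(-\beta)$ when $n$ is odd and as $\tfrac{\alp(D_{Y^*})}{\alp(1)}\eta_{Y^*}\,\alp(-\beta)\,\chi_{D_{Y^*}}(-\beta)$ when $n$ is even. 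When $n$ is odd the factor $\tfrac{\alp(1)}{\alp(D_{Y^*})}$ cancels against the factor $\tfrac{\alp(D_{Y^*})}{\alp(1)}$ occurring in $c^{(n-1)}_{Y^*}(\ome,s-\tfrac12)$, and when $n$ is even the two copies of $\eta_{Y^*}$ cancel; after absorbing the $|\beta|^{(n-1)/2}$ coming from $|\det(-\beta^{-1}Y^*)|^{-1/2}$, the $\beta$-integral is then one-dimensional, with weight $|\beta|^{s-1}$, a linear phase inherited from the Gaussian, and twisting character $\ome\chi_{\pm D_{Y^*}}$ — it is of Tate type when $n$ is odd and of the type treated in the last lemma of Section~\ref{sec:1} when $n$ is even (one uses there $(\ome\chi)^2=\ome^2$ and $\chi(4)=1$), and the resulting sums over $F^\times/F^{\times2}$ are collapsed by Lemma~\ref{lem:1.1} and Lemma~\ref{lem:1.3}.

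It remains to match orbits and collect constants. An open orbit $V_Q$ of $\mathrm{Sym}_n$ meets the $(\beta,b,Y)$-slice along $\{Y\in V_{Q'},\ \beta\in\rho F^{\times2}\}$ for exactly those pairs $(Q',\rho)$ with $Q'\oplus\rho\sim Q$; by the classification of quadratic forms over $F$, together with $\det(Q'\oplus\rho)=\rho\det Q'$, the relations $D_Q=\rho D_{Q'}$ ($n$ odd) and $D_Q=-4\rho D_{Q'}$ ($n$ even), and Lemma~\ref{lem:2.1}(2), this amounts to $\rho\equiv\det Q\,(\det Q')^{-1}$ modulo squares together with $\eta_Q=\eta_{Q'}\chi_{D_{Q'}}(\pm\rho)$; the factor $\chi_{D_{Q'}}(\rho)$ needed to convert a full $F^\times$-integral back into the coset $\rho F^{\times2}$ is precisely what turns the surviving $\eta_{Y^*}$ (up to the sign $\chi_{D_{Y^*}}(-1)$) into $\eta_Q$ when $n$ is odd, and what produces the $\ome\chi_{D_Q}$-dependence when $n$ is even. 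Collecting everything, the product $\prod_r\vep'(2s-\cdots,\ome^2)^{-1}$ and the first $\vep'(\cdots,\ome)^{-1}$ factor of $c^{(n-1)}_{Q'}(\ome,s-\tfrac12)$ carry over verbatim, the extra $\vep'(2s,\ome^2)^{-1}$ and $\ome^{-1}(4)$ needed when $n$ is even (none are needed when $n$ is odd) and the correct power of $|2|$ are supplied by the last lemma of Section~\ref{sec:1}, the Clifford invariants combine via $\eta_{Y^*}^2=1$ and Lemma~\ref{lem:2.1}(2), and one reads off exactly the asserted $c_Q(\ome,s)$. All the interchanges of integration are valid on a common vertical strip, after which the identity of rational functions of $q^{-s}$ propagates by meromorphic continuation. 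The genuine obstacle is this last bookkeeping together with the verification that the phase $\psi(-\beta^{-1}Y^*[b])$ and its Gaussian dual recouple so as to reassemble $\widehat\Phi$ on $V_Q$; it is here that the precise normalization of the measures and the cocycle relations for $\alp$ and for $\vep'$ recorded in Section~\ref{sec:1} do all the work.
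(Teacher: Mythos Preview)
Your proposal is correct and follows essentially the same inductive scheme as the paper's proof: induction on $n$ via Gauss reduction (completing the square to peel off one variable), then applying the inductive hypothesis to the $(n-1)\times(n-1)$ block, the quadratic Gaussian identity (\ref{wcq}) to the off-diagonal vector, and a one-variable Tate-type functional equation to the remaining scalar, with Lemmas~\ref{lem:1.1}, \ref{lem:1.3}, and \ref{lem:2.1} governing the bookkeeping. The only cosmetic differences are that the paper splits off the bottom-right $1\times 1$ corner rather than your top-left one, and in the step producing even $n$ the paper re-derives the needed identity inline from Lemmas~\ref{lem:1.1} and \ref{lem:1.3} rather than invoking the unnumbered lemma at the end of Section~\ref{sec:1} as you do; neither change affects the substance of the argument.
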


\begin{proof}

\bigskip
We proceed by induction on $n$.  
When $n=1$, the theorem follows immediately from Tate's local functional equation.

Now we assume $n\geq1$. We assume the functional equation is true for $n$.  We put $V=\mathrm{Sym}_{n}(F)$, $V'=\mathrm{Sym}_{n+1}(F)$, $\sig={(n+1)/2}$, and $\sig'={(n+2)/2}$.  We denote the set of $\GL_n$ orbits of $V$ by $\calo$ and the set of $\GL_{n+1}$ orbits of $V'$ by $\calo'$.  

We may assume $\Phi\left(\begin{pmatrix} T&y \\ ^t\! y&x\end{pmatrix}\right)=\phi_1(T)\phi_2(y)\phi_3(x)$ for some $\phi_1\in\scrs(V)$, $\phi_2\in\scrs(F^{n})$, $\phi_3\in\scrs(F)$.
Note that 
\[
\widehat\Phi\left(\begin{pmatrix} T&y \\ ^t\! y&x \end{pmatrix}\right) =|2|^{{n}/2} \hat\phi_1(T)\hat\phi_2(2y)\hat\phi_3(x).
\]
We write $T[y]={}^t\;\!\!y T y$ for $T\in\mathrm{Sym}_n(F)$ and $y\in F^n$.
Then we have
\begin{align*} 
&\int_{T'\in V'} \Phi(T')\ome(\det T')|\det T'|^{s-{\sig'}} d T'  \\
=&\int_{T\in V}\int_{y\in F^n}\int_{x\in F}  \Phi\left(\begin{pmatrix} T+x^{-1}
y\;\!\!\cdot^t\! y &y \\  {}^t\! y&x \end{pmatrix}\right) \\
&\qquad\qquad\times
\ome(x\det T)|x\det T|^{s-{\sig'}}d x\,d y\,d T.
\end{align*}
Consider the integral 
\begin{align*}
&\int_{T\in V}\int_{y\in F^n}\int_{x\in F}  \Phi\left(\begin{pmatrix} T+x^{-1}y\;\!\!\cdot^t\! y &y \\  {}^t\! y&x \end{pmatrix}\right) \\
&\qquad\qquad\times
\ome(x \det T)|\det T|^{s_1-{\sig'}} |x|^{s_2-{\sig'}}\, d x\,d y\,d T .
\end{align*}
This integral is absolutely convergent for $\mathrm{Re}(s_1), \mathrm{Re}(s_2)>(n-1)/2$.
By the functional equation for $V$, this is equal to
\begin{align*}
&\sum_{Q\in \calo}c_Q(\ome,s_1-\frac{1}{2})
\int_{T\in {V_Q}}\int_{y\in F^n}\int_{x\in F} \hat\phi_1(T)\psi(-x^{-1}T[y]) \phi_2(y) \phi_3(x) \\
&\quad \times \ome^{-1}(\det T) \ome(x) |x|^{s_2-{\sig'}}\,|\det T|^{-s_1+(1/2)}\,d x \,d y \,d T .
\end{align*}
This integral is absolutely convergent for $\mathrm{Re}(s_1)<1/2$ 
and $\mathrm{Re}(s_2)>(n-1)/2$.
On this absolute convergence domain, one can change the order of the integration.
Using the equation (\ref{wcq}), this is equal to
\begin{align*}
& \sum_{Q\in \calo}c_Q(\ome,s_1-\frac{1}{2})
\int_{T\in V_Q}\int_{y\in F^n}\int_{x\in F} \\
&\quad\times \overline{\alp_Q(x)}|2^n x^{-n}\det T|^{-{1/2}}
\hat\phi_1(T)\hat\phi_2(y)\phi_3(x) \\
& \quad \times \psi(4^{-1}x T^{-1}[y])\ome^{-1}(\det T)|\det T|^{-s_1+(1/2)}\ome(x)|x|^{s_2-{\sig'}} \, d x \, d y \, d T \\
=&  \sum_{Q\in \calo} \sum_{\rho\in F^\times/F^{\times 2}}
c_Q(\ome,s_1-\frac{1}{2})\overline{\alp_Q(\rho)}
\int_{T\in V_Q}\int_{y\in F^n}\int_{x\in \rho \cdot F^{\times 2}}
|2|^{{n/2}}\hat\phi_1(T)\hat\phi_2(2y)\phi_3(x) \\
& \quad \times \psi(xT^{-1}[y]) \ome(\det T) |\det T|^{-s_1} \ome(x) |x|^{s_2-1} \, d x \, d y \, d T .
\end{align*}
By Lemma \ref{lem:1.2}, this is equal to
\begin{align*}
&  \sum_{Q\in \calo} \sum_{\rho,\tht,\beta\in F^\times/F^{\times 2}}
4^{-1} |2| c_Q(\ome,s_1-\frac{1}{2})\overline{\alp_Q(\rho)}\, \chi_ \tht(\rho\beta)\, \vep'(s, \ome\chi_\tht)^{-1}  \\
& \quad \times \int_{T\in V_Q}\int_{y\in F^n}\int_{x\in \beta\cdot F^{\times 2}} |2|^{{n/2}}\hat\phi_1(T)\hat\phi_2(2y)\hat\phi_3(x+T^{-1}[y]) \\
& \quad \times \ome^{-1}(x\det T) |\det T|^{-s_1} |x|^{-s_2} \, d x \, d y \, d T .
\end{align*}
By putting $s_1=s_2=s$, we have
\[
c_{Q\oplus\beta}(\ome, s) = 4^{-1} |2| c_Q(\ome, s-\frac{1}{2}) 
\sum_{\rho, \tht\in F^\times/F^{\times 2}} \overline{\alp_Q(\rho)} \, \chi_\tht(\rho\beta)\, \vep'(s, \ome\chi_\tht)^{-1}.
\]

If $n=2m$ is even, by using Lemma \ref{lem:2.1}, we have
\begin{align*} &4^{-1} |2|  \sum_{\rho, \tht\in F^\times/F^{\times 2}} \overline{\alp_Q(\rho)}  \, \chi_\tht( \rho\beta)\, \vep'(s, \ome\chi_\tht)^{-1} \\
=& 4^{-1} |2| \,\eta_Q\,
\frac{\alp(D_Q)}{\alp(1)}
 \sum_{\rho, \tht\in F^\times/F^{\times 2}} 
\chi_{D_Q}( \rho ) \,
 \chi_\tht(\rho\beta)\, \vep'(s, \ome\chi_\tht)^{-1} \\
=& \eta_Q\, \frac{\alp(1)}{\alp(D_Q)}
\chi_{D_Q}(  -\beta ) \, \vep'(s, \ome\chi_{D_Q})^{-1} \\
=& \eta_{Q\oplus\beta}
\frac{\alp(1)}{\alp(D_Q)} 
 \, \vep'(s, \ome\chi_{D_Q})^{-1}.
\end{align*}
It follows that
\begin{align*}
 c_{Q\oplus\beta}(\ome, s) =& c_Q(\ome, s-\frac{1}{2})  \eta_{Q\oplus\beta} \, \frac{\alp(1)}{\alp(D_Q)} 
\vep'(s, \ome\chi_{D_Q})^{-1} \\
=&  \vep'(s-m,\ome)^{-1}\prod_{r=1}^{m}\vep'(2s-2m-1+2r,\ome^2)^{-1} \\
 & \times |2|^{-2ms+(m(2m+1))/2)} \ome^{-m}(4) \,\eta_{Q\oplus\beta} .
\end{align*}
On the other hand, if $n=2m+1$ is odd, by using Lemma \ref{lem:2.1} (1), we have
\begin{align*} &4^{-1} |2|  \sum_{\rho, \tht\in F^\times/F^{\times 2}} \overline{\alp_Q(\rho)}  \, \chi_\tht( \rho\beta)\, \vep'(s, \ome\chi_\tht)^{-1} \\
=&4^{-1} |2|   \frac{\alp(1)}{\alp(D_Q)} 
\eta_Q\,
 \sum_{\rho, \tht\in F^\times/F^{\times 2}} 
\overline{\alp(\rho)} \, \chi_{D_Q}(  \rho ) 
 \chi_\tht(\rho\beta)\, \vep'(s, \ome\chi_\tht)^{-1} \\
=& 4^{-1} |2|  \frac{\alp(1)}{\alp(D_Q)} 
\eta_Q\,
 \sum_{\rho, \tht\in F^\times/F^{\times 2}} 
\overline{\alp(\rho)} \, \chi_{\rho}(D_Q\tht) \, 
 \chi_\tht( \beta ) \, \vep'(s, \ome\chi_\tht)^{-1} .
\end{align*}
By Lemma \ref{lem:1.1} and Lemma \ref{lem:1.3}, we have
\begin{align*}
& 4^{-1} |2|  \frac{\alp(1)}{\alp(D_Q)} 
\eta_Q\,
 \sum_{\rho, \tht\in F^\times/F^{\times 2}} 
\overline{\alp(\rho)} \, \chi_{\rho}(D_Q\tht) \, 
 \chi_\tht( \beta ) \, \vep'(s, \ome\chi_\tht)^{-1} \\
=& 2^{-1}|2|^{1/2}  \overline{\alp(D_Q)} \eta_Q\,
\sum_{\tht\in F^\times/F^{\times 2}} 
\alp(D_Q\tht) \,  
 \chi_\tht( \beta ) \, \vep'(s, \ome\chi_\tht)^{-1} \\
=&2^{-1}|2|^{1/2}  \overline{\alp(D_Q)} \eta_Q\,
 \sum_{\tht\in F^\times/F^{\times 2}} 
\alp(-\beta\tht) \,  
 \chi_{D_Q\tht}( \beta ) \, \vep'(s, \ome\chi_{-D_Q\beta\tht})^{-1} \\
=&2^{-1}|2|^{1/2}   \overline{\alp(D_Q)\alp(\beta)} \, \chi_{ D_Q}( \beta ) \,\eta_Q\,
 \sum_{\tht\in F^\times/F^{\times 2}} 
\frac{\alp(1)}{\alp(\tht)} 
\, \vep'(s, \ome\chi_{-D_Q\beta\tht})^{-1} \\
=& |2|^{-2s+(1/2)} \ome^{-1}(4) 
\frac{\alp(D_{Q\oplus\beta})}{\alp(1)} 
\eta_Q\,
 \vep'(2s,\ome^2)^{-1}\vep'(s+\frac{1}{2}, \ome\chi_{D_{Q\oplus\beta}}) .
\end{align*}
It follows that
\begin{align*} c_{Q\oplus\beta}(\ome,s)=&c_Q(\ome,s-\frac{1}{2}) |2|^{-2s+(1/2)} \ome^{-1}(4) 
\frac{\alp(D_{Q\oplus\beta})}{\alp(1)} \eta_Q
 \\
&\times  
\vep(2s,\ome^2)^{-1} \vep'(s+\frac{1}{2}, \ome\chi_{D_{Q\oplus\beta}}) \\
=&  \vep'(s-m-\frac{1}{2},\ome)^{-1}\prod_{r=1}^{m+1}\vep'(2s-2m-2+2r,\ome^2)^{-1} \\
 & \times |2|^{-2(m+1)s+({(m+1)(2m+1)}/2)}\ome^{-m-1}(4) \frac{\alp(D_{Q\oplus\beta})}{\alp(1)}  
\vep'(s+\frac{1}{2}, \ome\chi_{D_{Q\oplus\beta}}). \\
\end{align*}
Thus we have proved Theorem \ref{thm:2.1}
\end{proof}

By the theory of prehomogeneous vector space, there is a function $c'_Q(\ome, s)$ such that
\begin{align*}
& \int_{V} \Phi(X)  \ome(\det X)\, \eta_X \,|\det X|^{s-\sig}\,d X \\
=&\sum_{Q\in\calo}c'_Q(\ome,s)\int_{V_\eta}\widehat\Phi(X) \ome^{-1}(\det X) \, |\det X|^{-s}\,d X.
\end{align*}

\begin{theorem}
\label{thm:2.2}

If $n=2m+1$ is odd, then we have
\begin{align*} 
c'_Q(\ome, s)=& 
\vep'(s,\ome)^{-1}\prod_{r=1}^{m}\vep'(2s-2m-2+2r,\ome^2)^{-1}\\
&\times |2|^{-2ms+(m(2m+3)/2)} \ome^{-m}(4).
\end{align*}

If $n=2m$ is even, then we have
\begin{align*}
 c'_Q(\ome, s)=& 
\prod_{r=1}^{m}\vep'(2s-2m-1+2r,\ome^2)^{-1}\\
&\times |2|^{-2ms+(m(2m+1)/2)} \ome^{-m}(4)
\frac{\alp(1)}{\alp(D_Q)} 
 \eta_Q 
\end{align*}
\end{theorem}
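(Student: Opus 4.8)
The plan is to establish Theorem~\ref{thm:2.2} by the same induction on $n$ that proves Theorem~\ref{thm:2.1}, now carrying the factor $\eta_X$ through the computation. For the base case $n=1$ every rank-one form has trivial even Clifford algebra, so $\eta_X\equiv 1$ on $\vsst$; the assertion is then Tate's local functional equation, which gives $c'_Q(\ome,s)=\vep'(s,\ome)^{-1}$ (the case $m=0$).

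For the inductive step I would pass from $V=\mathrm{Sym}_n(F)$ to $V'=\mathrm{Sym}_{n+1}(F)$ exactly as in the proof of Theorem~\ref{thm:2.1}: write $\Phi$ as $\phi_1(T)\phi_2(y)\phi_3(x)$ on blocks, use the Schur-complement identity, which makes $\begin{pmatrix}T+x^{-1}y\,{}^t\!y&y\\ {}^t\!y&x\end{pmatrix}$ $\GL_{n+1}$-equivalent to $T\oplus x$, and integrate over $x$ and $y$. The genuinely new input is that $\eta_{T'}=\eta_{T\oplus x}$, which by Lemma~\ref{lem:2.1}(2) equals $\eta_T\,\chi_{D_T}(x)$ for $n$ odd and $\eta_T\,\chi_{D_T}(-x)$ for $n$ even. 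Writing $\chi_{D_T}(x)=\langle(-1)^{[n/2]},x\rangle\,\langle \det T,x\rangle=\langle(-1)^{[n/2]},x\rangle\,\chi_x(\det T)$ separates this into a factor depending only on $x$ and the twist $\chi_x$ of the character on $\det T$; hence the $T$-integral becomes a dimension-$n$, $\eta$-twisted zeta integral for the character $\ome\chi_x$, to which the inductive hypothesis applies, and the remaining $x$- and $y$-integrations are treated by (\ref{wcq}), Lemma~\ref{lem:1.2}, and the orthogonality relations for Weil constants in Lemma~\ref{lem:1.1} and Lemma~\ref{lem:1.3}, exactly as in the proof of Theorem~\ref{thm:2.1}. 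This yields a recursion for $c'_{Q\oplus\beta}(\ome,s)$ in terms of the constants $c'_Q(\ome\chi_x,s-\tfrac12)$ and the $\vep'$- and $|2|$-factors coming from the $x$-integral; with $\sig'=(n+2)/2$ and the bookkeeping of the powers of $|2|$ and of $\ome(4)$ this matches the claimed closed form for both parities and closes the induction.

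For $n$ odd there is a shorter route worth recording. In Theorem~\ref{thm:2.1} with $n=2m+1$ the constant $c_Q(\ome,s)$ depends on the orbit only through $\eta_Q$, so the functional equation of Theorem~\ref{thm:2.1} collapses to $\int_V\Phi(X)\ome(\det X)|\det X|^{s-\sig}dX=\kappa(\ome,s)\int_V\widehat\Phi(X)\ome^{-1}(\det X)\,\eta_X\,|\det X|^{-s}dX$ for a scalar $\kappa(\ome,s)=c_Q(\ome,s)/\eta_Q$. Applying this to $\widehat\Phi$ in place of $\Phi$, with $\ome^{-1}$ in place of $\ome$ and $s$ replaced by $\sig-s$, and using Fourier inversion $\widehat{\widehat\Phi}(X)=\Phi(-X)$ together with the change of variables $X\mapsto -X$ (which preserves $dX$ and $\eta_X$ when $n$ is odd and sends $\det X\mapsto -\det X$), one obtains the functional equation of Theorem~\ref{thm:2.2} with $c'_Q(\ome,s)=\bigl(\ome(-1)\,\kappa(\ome^{-1},\sig-s)\bigr)^{-1}$, independent of $Q$. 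Inserting $\sig=m+1$ and simplifying by means of $\vep'(s,\ome)\vep'(1-s,\ome^{-1})=\ome(-1)$ and $\vep'(t,\ome^2)\vep'(1-t,\ome^{-2})=1$ produces exactly $\vep'(s,\ome)^{-1}\prod_{r=1}^m\vep'(2s-2m-2+2r,\ome^2)^{-1}\,|2|^{-2ms+m(2m+3)/2}\,\ome^{-m}(4)$.

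The main obstacle is the even case. When $n=2m$ the constant $c_Q(\ome,s)$ of Theorem~\ref{thm:2.1} genuinely depends on $D_Q$ through the factor $\vep'(s+\tfrac12,\ome\chi_{D_Q})$, so the collapse just used for odd $n$ is unavailable and one is forced to run the full induction. The delicate point there is the coupling between the auxiliary variable $x$ and the matrix $T$ introduced by $\chi_{D_T}(\pm x)$: one must keep track of the dependence of the inductive constant on the quadratic character $\chi_x$ and then use Lemma~\ref{lem:1.1} and Lemma~\ref{lem:1.3} to repackage the resulting finite sum over $F^\times/F^{\times2}$ — this is precisely the mechanism by which, via Lemma~\ref{lem:2.1}(1), the $D_Q$-dependence of $\alp_Q$ is converted into the Clifford invariant. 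Once that is carried out the constant $|2|^{-2ms+m(2m+1)/2}\,\ome^{-m}(4)\,\tfrac{\alp(1)}{\alp(D_Q)}\,\eta_Q$ falls out.
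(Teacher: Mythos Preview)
Your proposal is correct and matches the paper's own argument closely. The paper does exactly the two things you outline: for odd $n=2m+1$ it uses your ``shorter route'' (replace $\Phi,\ome,s$ by $\widehat\Phi,\ome^{-1},m+1-s$ in Theorem~\ref{thm:2.1} and invert), and for even $n$ it runs the one inductive step from odd $n$ to even $n+1$, obtaining precisely the recursion $c'_{Q\oplus\beta}(\ome,s)=4^{-1}|2|\sum_{\rho,\tht}c'_Q(\ome\chi_\rho,s-\tfrac12)\,\overline{\alp_Q(\rho)}\,\chi_\rho(\det Q)\,\chi_\tht(\rho\beta)\,\vep'(s,\ome\chi_{(-1)^m\tht})^{-1}$, which is the $\chi_x$-twist you anticipated from $\eta_{T\oplus x}=\eta_T\chi_{D_T}(x)$; the only cosmetic difference is that you frame the full induction first and the shortcut second, whereas the paper uses the shortcut as its primary device for odd $n$ and then needs only the single odd-to-even step.
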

This theorem can be proved in the same way as Theorem \ref{thm:2.1}.
As we just give an outline of the proof.
\begin{proof}
Assume $n=2m+1$ is odd.
By Theorem \ref{thm:2.1}, we have
\begin{align*}
&\int_{V} \Phi(X)\ome(\det X)\,|\det X|^{s-m-1}\,d X \\
=&
|2|^{-2ms+(m(2m+1)/2)} \ome^{-m}(4) 
\vep'(s-m,\ome)^{-1}\prod_{r=1}^{m}\vep'(2s-2m-1+2r,\ome^2)^{-1}  \\
&\quad\times
\sum_{Q\in\calo}
\int_V
\widehat\Phi(X) \ome^{-1}(\det X) \eta_X |\det X|^{-s}\,d X.
\end{align*}
By changing $\ome$, $s$ and $\Phi$ by $\ome^{-1}$, $m+1-s$ and $\widehat\Phi$, respectively, we obtain the desired functional equation for odd $n$.
Assume $n$ is odd.
Then as in the proof of Theorem \ref{thm:2.1}, one can prove
\begin{align*}
c'_{Q\oplus \beta}(\ome, s)=&
4^{-1}|2|
\sum_{\rho, \tht\in F^\times/F^{\times 2}} 
c'_Q(\ome\chi_{\rho}, s-\frac12)
\overline{\alp_Q(\rho)} \\
&\quad\times
\chi_{\rho}(\det Q)\chi_\tht(\rho\beta)
\vep'(s, \ome\chi_{(-1)^{m}\tht})^{-1}.
\end{align*}
After a little calculation, we obtain the desired functional equation for even $n$.
\end{proof}

\begin{remark}
The method of the proofs above are due to F.~Sato.
See also Muller \cite{muller}.
Sweets \cite{sweet} calculated the ``gamma matrix'' for the prehomogeneous vector space $V$ in the case $\ome=\mathbf{1}$.
Theorem \ref{thm:2.1} for $\ome=\mathbf{1}$ follows from his results.
\end{remark}

\section{\bf Degenerate Whittaker functionals}
\label{sec:3}

In this section, we follow the argument of Kudla and Sweet \cite{sweet}.
Let 
\[
\SP_n(F)=\left\{\begin{pmatrix} A & B \\ C & D\end{pmatrix}\in\mathrm{M}_n(F)\;\vrule \; A \,{}^t\! D - B \,{}^t\;\!\! C=\mathbf{1}_n\right\}
\]
be the symplectic group of rank $n$.
Let 
\[
P_n(F)=\left\{\begin{pmatrix} A & B \\ 0 & {}^t\!A^{-1}\end{pmatrix}\in\mathrm{Sp}_n(F)\right\}
\]
be the Siegel parabolic subgroup of $\SP_n(F)$.
Put 
\[
\bfn(B)=\begin{pmatrix} \mathbf{1}_n & B \\ 0 & \mathbf{1}_n \end{pmatrix}\in \SP_n(F)
\]
for $B\in\mathrm{Sym}_n(F)$.
Then $N_n(F)=\{\bfn(B)\,|\, B\in \mathrm{Sym}_n(F)\}$ is the unipotent radical of $P_n(F)$.

For a quasi-character $\ome$ of $F^\times$, we consider the degenerate principal series $I(\ome, s)=\mathrm{Ind}_{P_n}^{\SP_n}(\ome|\det|^s)$.
The space of $I(\ome, s)$ consists of locally constant function $f(g)$ on $\SP_n(F)$ such that
\[
f\left(
\begin{pmatrix} A & B \\ 0 & {}^t\! A^{-1}\end{pmatrix}g
\right)
=\ome(\det A)|\det A|^{s+((n+1)/2)} f(g)
\]
for any $\begin{pmatrix} A & B \\ 0 & {}^t\! A^{-1}\end{pmatrix}\in P_n(F)$ and $g\in \SP_n(F)$.
For $f(g)\in I(\ome, s)$ and  $B\in \mathrm{Sym}_n(F)^{\mathrm{ss}}$, put
\[
M(s)f(g )=\int_{\mathrm{Sym}_n(F)} f( w_n \bfn(x) g)\, dx.
\]
\[
\mathrm{Wh}_B(s)f=\int_{\mathrm{Sym}_n(F)} f( w_n \bfn(x) ) \overline{\psi(\mathrm{tr}(Bx))} \, dx.
\]
Here,
\[
w_n=\begin{pmatrix} 0  & -{\bf1}_n \\ {\bf1}_n & 0 \end{pmatrix}.
\]
The integrals $M(s)$ and $\mathrm{Wh}_B(s)$ are absolutely convergent for $\mathrm{Re}(s)\gg 0$ and can be meromorphically continued to the whole complex plane.
If $s$ is not a pole of $M(s)$, then $M(s)f(g)\in I(\ome^{-1}, -s)$.
Moreover, it is known that $\mathrm{Wh}_B(s)$ is entire.

\begin{lemma}
\label{lem:4.3}
The following functional equation holds:
\[
\mathrm{Wh}_B(-s)\circ M(s)=\ome^{-1}(\det B)|\det B|^{-s} c_B(\ome, s) \mathrm{Wh}_B(s).
\]
\end{lemma}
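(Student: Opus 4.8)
The plan is to follow Kudla and Sweet and transport all the operators to the open Bruhat cell. For $f\in I(\ome,s)$ set $\Phi_f(x)=f(w_n\bfn(x))$ for $x\in\mathrm{Sym}_n(F)$; since $w_nN_n(F)$ is carried into the open dense cell $P_n(F)w_nN_n(F)$ of $\SP_n(F)$, the integrals defining $M(s)$ and $\mathrm{Wh}_B(s)$ are controlled by $\Phi_f$. Directly from the definition one has $\mathrm{Wh}_B(s)f=\int_{\mathrm{Sym}_n(F)}\Phi_f(x)\overline{\psi(\mathrm{tr}(Bx))}\,dx$, which up to normalization is the Fourier transform of $\Phi_f$ at $B$.

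Next I would compute $(M(s)f)(w_n\bfn(y))$. From the Bruhat-type identity
\[
w_n\bfn(x)w_n=\bfn(-x^{-1})\begin{pmatrix}x^{-1}&0\\0&x\end{pmatrix}w_n\bfn(-x^{-1})\qquad(\det x\neq0),
\]
the transformation law of $I(\ome,s)$ (with exponent $\sig=(n+1)/2$), and the change of variable $x\mapsto x^{-1}$ on $\mathrm{Sym}_n(F)$ (whose Jacobian is $|\det x|^{-(n+1)}$), one obtains
\[
(M(s)f)(w_n\bfn(y))=\int_{\mathrm{Sym}_n(F)}\ome(\det x)\,|\det x|^{s-\sig}\,\Phi_f(y-x)\,dx ,
\]
so that $\Phi_{M(s)f}$ is the convolution of $\Phi_f$ with the zeta distribution $k_s(x)=\ome(\det x)|\det x|^{s-\sig}$. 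Applying the Whittaker integral to $M(s)f\in I(\ome^{-1},-s)$, interchanging the integrations (legitimate on the common domain of absolute convergence, the rest by meromorphic continuation), and substituting $z=y-x$ turns the convolution into a product:
\[
\mathrm{Wh}_B(-s)\bigl(M(s)f\bigr)=\Bigl(\int_{\mathrm{Sym}_n(F)}\ome(\det x)\,|\det x|^{s-\sig}\,\overline{\psi(\mathrm{tr}(Bx))}\,dx\Bigr)\,\mathrm{Wh}_B(s)f .
\]

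It remains to evaluate the scalar. It is the distributional Fourier transform of $k_s$ at $-B$, and this is precisely what the local functional equation of the prehomogeneous space $\mathrm{Sym}_n(F)$ of Section~\ref{sec:2} computes: pairing that functional equation against an arbitrary $\Phi\in\scrs(\mathrm{Sym}_n(F))$ and using $\widehat{\widehat\Phi}(x)=\Phi(-x)$ shows that, as a function of $B\in\vsst$,
\[
\int_{\mathrm{Sym}_n(F)}\ome(\det x)\,|\det x|^{s-\sig}\,\overline{\psi(\mathrm{tr}(Bx))}\,dx=\ome^{-1}(\det B)\,|\det B|^{-s}\,c_B(\ome,s),
\]
where $c_B(\ome,s)$ means $c_Q(\ome,s)$ for $Q$ a representative of the $\GL_n$-orbit of $B$ (well defined, since by Theorem~\ref{thm:2.1} $c_Q$ depends only on the orbit invariants $\eta_Q$, resp.\ $D_Q$; one also checks $\eta_{-Q}=\eta_Q$ and $D_{-Q}=D_Q$, so the reflections $x\mapsto-x$ and the factors $\ome((-1)^n)$ appearing along the way are harmless). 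Substituting back gives the asserted functional equation.

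The main obstacle is bookkeeping rather than ideas: pinning down the Bruhat decomposition and the Jacobian of $x\mapsto x^{-1}$ so that the exponent of $|\det x|$ comes out to exactly $s-\sig$, and tracking the sign and reflection factors through the prehomogeneous functional equation (they all cancel). Secondarily, one must be careful with the analytic justification — all manipulations are first carried out where $M(s)$, the zeta integral, and $\mathrm{Wh}_B$ converge absolutely, and then extended by meromorphic continuation, using that $\mathrm{Wh}_B(s)$ is entire and that $M(s)f\in I(\ome^{-1},-s)$ away from the poles of $M(s)$.
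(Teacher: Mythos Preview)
Your argument is correct and rests on exactly the same two ingredients as the paper's proof: the Bruhat identity $w_n\bfn(x)w_n\in P_n(F)w_n\bfn(-x^{-1})$ (giving $\Phi_{M(s)f}$ as the convolution of $\Phi_f$ with the zeta kernel $\ome(\det x)|\det x|^{s-\sig}$) and the prehomogeneous functional equation of Theorem~\ref{thm:2.1} to identify the scalar.

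The packaging is slightly different, however. You work with a general $f$, factor the Whittaker integral of the convolution into a product, and then interpret the scalar factor as the distributional Fourier transform of the zeta kernel; as you note, that last integral is never absolutely convergent, so the interchange of integrations has to be justified a posteriori. The paper avoids this by choosing a \emph{single} test vector: it takes $\Phi\in\scrs(\mathrm{Sym}_n(F))$ to be the characteristic function of a small neighbourhood $B+\frkp^m\mathrm{Sym}_n(\frko)$ inside the orbit $V_B$ on which $|\det\,\cdot\,|$ and $\ome(\det\,\cdot\,)$ are constant, and lets $f_\Phi\in I(\ome,s)$ be supported on the big cell with $f_\Phi(w_n\bfn(x))=\widehat{\Phi}(x)$. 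Then Theorem~\ref{thm:2.1} applies directly to an honest Schwartz function, the orbit sum collapses to the single term $c_B(\ome,s)\ome^{-1}(\det B)|\det B|^{-s}\widehat{\Phi}(x)$, and both sides of the lemma are computed on $f_\Phi$ without any distributional gymnastics. The paper then (implicitly) uses the one-dimensionality of the space of $(N_n,\psi_B)$-functionals on $I(\ome,s)$ to pass from this one vector to all of $I(\ome,s)$. Your route trades that uniqueness step for the analytic-continuation argument; either works.
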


\begin{proof}
Let $m$ be a sufficiently large integer such that
\[
B+\frkp^m\mathrm{Sym}_n(\frko)\subset \{x\in\mathrm{Sym}_n(F)_B\, |\, |\det x|=|\det B|, \;\ome(\det x)=\ome(\det B) \}.
\] 
Here, $\mathrm{Sym}_n(F)_B=\{B[X]\,|\, X\in \GL_n(F)\}$ is the orbit containing $B$.
Note that $\mathrm{Sym}_n(F)_B$ is an open subset of $\mathrm{Sym}_n(F)$.
Let $\Phi\in\scrs(\mathrm{Sym}_n(F))$ be the characteristic function of $B+\frkp^m\mathrm{Sym}_n(\frko)$.
We define $f_\Phi\in I(\ome, s)$ such that
\begin{itemize}
\item $\mathrm{Supp}(f_\Phi)\subset P_n(F)wN_n(F)$.
\item $f(w_n\bfn(x))=\widehat\Phi(x)$ for $x\in \mathrm{Sym}_n(F)$.
\end{itemize}
Then, we have $\mathrm{Wh}(s)f_\Phi=\widehat{\widehat\Phi}(-B)\neq 0$.
On the other hand,  we have
\begin{align*}
M(s)f_\Phi(w_n\bfn(x))=
&\int_{y\in\mathrm{Sym}_n(F)} f_\Phi(w_n\bfn(y)w_n\bfn(x))\, dx  \\
=&\int_{y\in\mathrm{Sym}_n(F)} \widehat\Phi(x-y^{-1})\ome^{-1}(\det y)|\det y|^{-s-((n+1)/2)}\, dy \\
=&\int_{y\in\mathrm{Sym}_n(F)} \widehat\Phi(x-y)\ome(\det y)|\det y|^{s-((n+1)/2)}\, dy.
\end{align*}
By Theorem \ref{thm:2.1}, this is equal to
\begin{align*}
&\sum_{B\in\calo} c_B(\ome, s)
\int_{y\in \mathrm{Sym}_n(F)_B}  \Phi(y)\psi(\mathrm{tr}(xy)) \ome^{-1}(\det y) |\det y|^{-s} \, dy \\
=&
 c_B(\ome, s)\ome^{-1}(\det B) |\det B|^{-s} 
\int_{y\in \mathrm{Sym}_n(F)}  \Phi(y)\psi(\mathrm{tr}(xy)) \, dy \\
=&
 c_B(\ome, s)\ome^{-1}(\det B) |\det B|^{-s} 
\widehat\Phi(x).
\end{align*}
It follows that
\begin{align*}
\mathrm{Wh}(-s)M(s)f_\Phi=&\int_{x\in\mathrm{Sym}_n(F)} M(s)f_\Phi(w_n\bfn(x)) \overline{\psi_B(x)}\, dx \\
=&c_B(\ome, s)\ome^{-1}(\det B) |\det B|^{-s}\widehat{\widehat\Phi}(-B) .
\end{align*}
Hence the lemma.
\end{proof}

\section{\bf Siegel series and its functional equation}
\label{sec:4}
As before, let $F$ be a non-archimedean local field.
In this section, we assume that the additive character $\psi$ is of order $0$.
Recall that $B\in \mathrm{Sym}_n(F)$ is called non-degenerate if $D_B\neq 0$.
The set of non-degenerate elements in $\mathrm{Sym}_n(F)$ is denoted by $\mathrm{Sym}_n(F)^{\mathrm{ss}}$.
For $B\in\mathrm{Sym}_n(F)^{\mathrm{ss}}$, put
\begin{align*}
D_B=&
(-4)^{[n/2]}\det B, \\
\xi_B=&
\begin{cases}
\langle D_B, \vpi\rangle & \text{ if $F(\sqrt{D_B})/F$ is unramified,} \\
0 & \text{ otherwise.}
\end{cases}
\end{align*}
Let $\frkd_B$ be the conductor of the  extension $F(\sqrt{D_B})/F$.
We set
\[
\del_B=(\mathrm{ord}D_B-\mathrm{ord}\frkd_B)/2,
\]
where $\mathrm{ord}$ is the valuation of $F$.

We recall the theory of Siegel series (cf. Shimura \cite{shimura:94}, \cite{shimura:97}).
For $B\in\mathrm{Sym}_n(F)^{\mathrm{ss}}$, we define a polynomial 
$\gam(B, X)\in\ZZ[X]$ by
\[
\gam(B, X) 
=
\begin{cases}
(1-X)(1-q^{n/2}\xi_B X)^{-1}\prod_{i=1}^{n/2}(1-q^{2i}X^2) & \text{ if $n$ is even,} \\
\noalign{\vskip 5pt}
(1-X) \prod_{i=1}^{(n-1)/2}(1-q^{2i}X^2) & \text{ if $n$ is odd.}
\end{cases}
\]
Let $f_0^{(s)}$ be the function on the symplectic group $\SP_n(F)$ defined by 
\[
f_0^{(s)}(g)=|\det A|^{s+((n+1)/2)},
\]
for
\[
g=\begin{pmatrix} A & 0 \\ 0 & {}^t\!A^{-1} \end{pmatrix}\bfn(B)u,\qquad A\in\GL_n(F), B\in\mathrm{Sym}_n(F), u\in \SP_n(\frko).
\]
Then $f_0^{(s)}$ is a class one vector for the degenerate principal series $I(\mathbf{1}, s)=\mathrm{Ind}_{P_n}^{\SP_n}|\det|^{s}$.

Consider the integral
\[
b(B, s)=
|2|^{-n(n-1)/4}
\int_{N_{n}(k)} f_0^{(s-((n+1)/2))}\left(w_n \bfn(z)\right)\overline{\psi_B(z)}\, dz.
\]
This integral is absolutely convergent for $\mathrm{Re}(s)\gg 0$.
Moreover, there exists a polynomial $F(B, X)\in \ZZ[X]$ such that 
\[
b(B, s)=\gam(B, q^{-s})F(B; q^{-s}).
\]
For a proof of this fact, see \cite{shimura:97}.
Let 
\[
\calh_n(\frko)=\{ B=(b_{ij})\in \mathrm{Sym}_n(F)\,|\, b_{ij}\in 2^{-1}\frko, \, b_{ii}\in \frko\; (1\leq i\leq j \leq n)\}
\]
be the set of half-integral symmetric matrices of $F$.
It is known  that $F(B, X)=0$ unless $B\in \calh_n(\frko)$.
Moreover, if $B\in\calh_n(\frko)$, then $F(B, 0)=1$.

\begin{theorem}\label{prop:4.1}
The following functional equations hold.
\begin{enumerate}
\item If $n$ is even, then
\[
F(B, q^{-n-1}X^{-1})=(q^{(n+1)/2}X)^{-2\del_B} F(B, X).
\]
\item If $n$ is odd, then 
\[
F(B, q^{-n-1}X^{-1})=\eta_B (q^{(n+1)/2}X)^{-\mathrm{ord}(D_B)} F(B, X).
\]
\end{enumerate}
\end{theorem}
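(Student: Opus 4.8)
The plan is to derive both functional equations from Lemma~\ref{lem:4.3} with $\ome=\mathbf 1$, applied to the spherical vector $f_0^{(s)}\in I(\mathbf 1,s)$, after identifying the Siegel series $b(B,\cdot)$ with the degenerate Whittaker functional $\mathrm{Wh}_B(\cdot)$ evaluated on $f_0^{(\cdot)}$. For the identification, comparing the integral defining $b(B,s)$ with the one defining $\mathrm{Wh}_B(s)$ in Section~\ref{sec:3} and matching the Haar measures --- the factor $|2|^{-n(n-1)/4}$ in $b(B,s)$ being exactly what turns the coordinate measure on $N_n(F)$ into a fixed multiple of the self-dual measure on $\mathrm{Sym}_n(F)$ --- one obtains
\[
b\!\left(B,\ s+\tfrac{n+1}{2}\right)=C\cdot\mathrm{Wh}_B(s)\,f_0^{(s)}
\]
with $C$ a nonzero constant independent of $s$ and $B$. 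Since $M(s)f_0^{(s)}\in I(\mathbf 1,-s)$ is fixed by $\SP_n(\frko)$, we have $M(s)f_0^{(s)}=c_n(s)\,f_0^{(-s)}$, where $c_n(s)=\int_{\mathrm{Sym}_n(F)}f_0^{(s)}(w_n\bfn(x))\,dx$ is computed by the Gindikin--Karpelevich integral for the Siegel parabolic of $\SP_n$; as $\psi$ has order $0$, $c_n(s)$ is an explicit finite product of ratios of values of $L(\cdot,\mathbf 1)$, with no $\vep$-factor.

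\emph{Raw functional equation.} Feeding $f_0^{(s)}$ into Lemma~\ref{lem:4.3} and using the two facts above gives
\[
c_n(s)\,b\!\left(B,\ \tfrac{n+1}{2}-s\right)=c_B(\mathbf 1,s)\,|\det B|^{-s}\,b\!\left(B,\ \tfrac{n+1}{2}+s\right).
\]
Put $X=q^{-(s+(n+1)/2)}$. Then $\tfrac{n+1}{2}-s=(n+1)-(s+\tfrac{n+1}{2})$, so $q^{-((n+1)/2-s)}=q^{-n-1}X^{-1}$, and $|\det B|^{-s}=(q^{(n+1)/2}X)^{-\mathrm{ord}(\det B)}$. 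Writing $b(B,t)=\gam(B,q^{-t})F(B,q^{-t})$ and dividing out the corresponding identity for $\gam(B,\cdot)$, the theorem becomes the assertion that
\[
\frac{\gam(B,X)}{\gam(B,q^{-n-1}X^{-1})}\cdot\frac{c_B(\mathbf 1,s)}{c_n(s)}\cdot(q^{(n+1)/2}X)^{-\mathrm{ord}(\det B)}
\]
equals $(q^{(n+1)/2}X)^{-2\del_B}$ when $n$ is even and $\eta_B\,(q^{(n+1)/2}X)^{-\mathrm{ord}(D_B)}$ when $n$ is odd.

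\emph{Explicit computation.} Now insert Theorem~\ref{thm:2.1} for $c_B(\mathbf 1,s)$, separating $n=2m$ and $n=2m+1$, together with the formula for $c_n(s)$. Since $\psi$ has order $0$, $\vep'(s,\mathbf 1,\psi)=(1-q^{-s})/(1-q^{s-1})$, so every $\vep'(\cdot,\mathbf 1)$ and every factor of $c_n(s)$ is a ratio of $L(\cdot,\mathbf 1)$-values, and $\gam(B,X)$ is designed precisely so that these all cancel in $\gam(B,X)/\gam(B,q^{-n-1}X^{-1})$ against everything but a monomial. For $n=2m$ even, the extra factor $\frac{\alp(D_Q)}{\alp(1)}\vep'(s+\tfrac12,\chi_{D_Q})$ in $c_B(\mathbf 1,s)$ is accounted for by the factor $(1-q^{n/2}\xi_B X)^{-1}$ of $\gam(B,X)$, which in the variable $X$ equals $L(s+\tfrac12,\chi_{D_B})$ when $F(\sqrt{D_B})/F$ is unramified and is trivial (as $\xi_B=0$) when it is ramified; combining this with the evaluation of $\alp(D_Q)/\alp(1)$ and of the $\vep$-factor of $\chi_{D_B}$ via Lemma~\ref{lem:2.1} removes the sign and yields the exponent $2\del_B=\mathrm{ord}(D_B)-\mathrm{ord}\frkd_B$. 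For $n=2m+1$ odd, $c_B(\mathbf 1,s)$ contains $\eta_Q=\eta_B$ explicitly, which passes straight through, while $\gam(B,X)$ cancels all the $L(\cdot,\mathbf 1)$-contributions coming from $c_n(s)$, $c_B(\mathbf 1,s)$ and $|\det B|^{-s}$ save for the monomial of degree $\mathrm{ord}(D_B)$. Finally, since $F(B,X)\in\ZZ[X]$ with $F(B,0)=1$, the resulting identity $F(B,q^{-n-1}X^{-1})=(\text{monomial})\cdot F(B,X)$ is consistent and forces $\deg F(B,X)$ to equal $2\del_B$ (resp.\ $\mathrm{ord}(D_B)$), completing the proof.

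\emph{Main obstacle.} The conceptual skeleton is short; the work lies entirely in the bookkeeping of normalizing constants and local factors. The two delicate points are: (i) pinning down $c_n(s)$ --- the precise Gindikin--Karpelevich product for the Siegel parabolic of $\SP_n$ --- and the measure normalization, so that the identification $b(B,s+\tfrac{n+1}{2})=C\,\mathrm{Wh}_B(s)f_0^{(s)}$ carries no stray power of $q$ or $|2|$; and (ii), the crux, checking that once Theorem~\ref{thm:2.1} is inserted, all the $L(\cdot,\mathbf 1)$- and $\vep'$-factors telescope against $\gam(B,\cdot)$ to leave exactly the asserted monomial. For $n$ even this needs the identification $(1-q^{n/2}\xi_B X)^{-1}=L(s+\tfrac12,\chi_{D_B})$ (unramified case) and a case-by-case evaluation of $\vep'(s+\tfrac12,\chi_{D_B})$ together with $\alp(D_Q)/\alp(1)$ in the ramified and unramified situations, relating the outcome to $\del_B$ and the conductor exponent $\mathrm{ord}\frkd_B$; for $n$ odd one must track how the sign $\eta_B$ survives while the remaining factors cancel down to the monomial of degree $\mathrm{ord}(D_B)$.
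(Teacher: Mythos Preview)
Your proposal is correct and follows essentially the same route as the paper: apply Lemma~\ref{lem:4.3} with $\ome=\mathbf 1$ to the spherical vector $f_0^{(s)}$, use the known formula for $M(s)f_0^{(s)}$, substitute Theorem~\ref{thm:2.1} for $c_B(\mathbf 1,s)$, and cancel against the $\gam(B,\cdot)$ factors. The paper simply writes down the explicit expressions for $c_B(\mathbf 1,s)$, $\gam(B,q^{\pm s-(n+1)/2})$, and the Gindikin--Karpelevich constant, then asserts the cancellation; you describe the same computation but are more explicit about where the bookkeeping lies, in particular the identification $\frac{\alp(D_B)}{\alp(1)}\vep'(s+\tfrac12,\chi_{D_B})=|\frkd_B|^{s}\,L(-s+\tfrac12)/L(s+\tfrac12)$ in the even case, which the paper uses without comment.
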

This theorem was first proved by Katsurada \cite{katsurada} for $F=\QQ_p$.
See also \cite{boechererkohnen} and \cite{satohironaka}.
\begin{proof}
To simplify the notation, we put 
\begin{align*}
\zeta(s)=&L(s, \mathbf{1})=(1-q^{-s})^{-1}, \\
L(s)=&L(s, \chi_{D_B})=(1-\xi_B q^{-s})^{-1}.
\end{align*}
It is well-known that
\[
M(s)f_0^{(s)}=
|2|^{n(n-1)/4}
\frac{\zeta(s-\frac{n-1}2)}{\zeta(s+\frac{n+1}2)}
\prod_{i=1}^{[n/2]} \frac{\zeta(2s-n+2i)}{\zeta(2s+n+1-2i)} f_0^{(-s)}.
\]
By Lemma \ref{lem:4.3}, we have
\begin{align*}
&
\gam(B, q^{s-((n+1)/2)}) F(B; q^{s-((n+1)/2)})
\\
&\quad \times |2|^{n(n-1)/4} \frac{\zeta(s-\frac{n-1}2)}{\zeta(s+\frac{n+1}2)} 
\prod_{i=1}^{[n/2]} \frac{\zeta(2s-n+2i)}{\zeta(2s+n+1-2i)} \\
=&
c_B(\mathbf{1}, s) |\det B|^{-s} 
\gam(B, q^{-s-((n+1)/2)}) F(B; q^{-s-((n+1)/2)}).
\end{align*}
We first prove (1).
If $n$ is even, then we have
\begin{align*}
c_B(\mathbf{1}, s)=
&|2|^{-ns+(n(n-1)/4)} |\frkd_B|^s 
\frac{L(-s+\frac12)}{L(s+\frac12)}
 \\
&\quad \times
\frac{\zeta(s-\frac{n-1}2)}{\zeta(-s+\frac{n+1}2)}
\prod_{i=1}^{n/2} 
\frac{\zeta(2s-n+2i)}{\zeta(-2s+n+1-2i)}, \\
\gam(B, q^{s-((n+1)/2)})=&
\frac{L(-s+\frac12)} {\zeta(-s+\frac{n+1}2)}
\prod_{i=1}^{n/2} \frac{1}{\zeta(-2s+n+1-2i)}, \\
\gam(B, q^{-s-((n+1)/2)}) =&
\frac{L(s+\frac12)}{\zeta(s+\frac{n+1}2)}
\prod_{i=1}^{n/2} \frac{1}{\zeta(2s+n+1-2i)}.
\end{align*}
Hence we have (1).
If $n$ is odd, then we have
\[
c_B(\mathbf{1}, s)=
|2|^{-(n-1)ns+(n(n-1)/4)}
\frac{\zeta(s-\frac{n-1}2)}{\zeta(-s+\frac{n+1}2)}
\prod_{i=1}^{n/2} 
\frac{\zeta(2s-n+2i)}{\zeta(-2s+n+1-2i)}, 
\]
\begin{align*}
\gam(B, q^{s-((n+1)/2)})=&\zeta(-s+\frac{n+1}2)^{-1}
\prod_{i=1}^{(n-1)/2} \zeta(-2s+n+1-2i)^{-1}, \\
\gam(B, q^{-s-((n+1)/2)}) =&\zeta(s+\frac{n+1}2)^{-1}
\prod_{i=1}^{(n-1)/2} \zeta(2s+n+1-2i)^{-1}.
\end{align*}
\end{proof}

\end{document}